\newtheorem{assumption}{Assumption}
\newtheorem{theorem}{Theorem}
\newtheorem{definition}{Definition}
\newtheorem{lemma}{Lemma}
\newtheorem{proposition}{Proposition}
\newtheorem{remark}{Remark}
\renewcommand{\det}{{\mathrm{det}}}
\newcommand{\tr}{{\mathrm{trace}}}
\newcommand{\vol}{{\mathrm{vol}}}
\newcommand{\differential}{{\rm{d}}}
\newcommand{\circleline}{\raisebox{0.5pt}{\tikz{\node[draw,scale=0.4,circle,fill=blue](){};\draw[-,blue,solid,line width = 1.0pt](-0.35,0.mm) -- (3.6mm,0.mm)}}}
\newcommand{\triangleline}{\raisebox{0pt}{\tikz{\node[draw,scale=0.3,regular polygon, regular polygon sides=3,fill=black!45!green](){};\draw[-,black!45!green,solid,line width = 1.0pt](-0.35,0.mm) -- (3.6mm,0.mm)}}}
\newcommand{\invtriangleline}{\raisebox{0pt}{\tikz{\node[draw,scale=0.3,regular polygon, regular polygon sides=3,fill=red,rotate=180](){};\draw[-,red,solid,line width = 1.0pt](-0.35,0.mm) -- (3.6mm,0.mm)}}}
\title{\LARGE \bf
Smallest Ellipsoid Containing $p$-Sum of Ellipsoids \\with Application to Reachability Analysis
}
\author{Abhishek Halder% <-this % stops a space
\thanks{Abhishek Halder is with the Department of Applied Mathematics, University of California, Santa Cruz, CA 95064, USA,
        {\tt\small{ahalder@ucsc.edu}}%
}}
\begin{document}

\maketitle
\thispagestyle{empty}
\pagestyle{empty}

%%%%%%%%%%%%%%%%%%%%%%%%%%%%%%%%%%%%%%%%%%%%%%%%%%%%%%%%%%%%%%%%%%%%%%%%%%%%%%%%
\begin{abstract}
We study the problem of ellipsoidal bounding of convex set-valued data, where the convex set is obtained by the $p$-sum of finitely many ellipsoids, for any real $p\geq 1$. The notion of $p$-sum appears in the Brunn-Minkowski-Firey theory in convex analysis, and generalizes several well-known set-valued operations such as the Minkowski sum of the summand convex sets (here, ellipsoids). We derive an outer ellipsoidal parameterization for the $p$-sum of a given set of ellipsoids, and compute the tightest such parameterization for two optimality criteria: minimum trace and minimum volume. For such optimal parameterizations, several known results in the system-control literature are recovered as special cases of our general formula. For the minimum volume criterion, our analysis leads to a fixed point recursion over a scalar that parameterizes the shape matrix of the outer ellipsoid. This recursion is proved to be contractive, and found to converge fast in practice. We apply these results to compute the forward reach sets for a linear control system subject to different convex set-valued uncertainty models for the initial condition and control, generated by varying $p\in[1,\infty]$. Our numerical results show that the proposed fixed point algorithm offers more than two orders of magnitude speed-up in computational time for $p=1$, compared to the existing semidefinite programming approach without significant effect on the numerical accuracy. For $p>1$, the reach set computation results reported here are novel. Our results are expected to be useful in real-time safety critical applications such as decision making for collision avoidance of autonomous vehicles, where the computational time-scale for reach set calculation needs to be much smaller than the vehicular dynamics time-scale.
\end{abstract}

%%%%%%%%%%%%%%%%%%%%%%%%%%%%%%%%%%%%%%%%%%%%%%%%%%%%%%%%%%%%%%%%%%%%%%%%%%%%%%%%

\noindent{\bf Keywords:} Ellipsoidal calculus, Firey $p$-sum, outer approximation, optimal ellipsoid, reach sets.

%%%%%%%%%%%%%%%%%%%%%%%%%%%%%%%%%%%%%%%%%%%%%%%%%%%%%%%%%%%%%%%%%%%%%%%%%%%%%%%%

\section{Introduction}
Computing an ellipsoid that contains given set-valued data, is central to many applications such as guaranteeing collision avoidance in robotics \cite{choi2009continuous,best2016real,yan2016path}, robust estimation \cite{schweppe1968recursive,bertsekas1971recursive,schlaepfer1972continuous,reshetnyak1989summation}, system identification \cite{fogel1979system,belforte1990parameter,kosut1992set}, and control \cite{kurzhanski2011optimization,angeli2008ellipsoidal}. To reduce conservatism, one requires such an ellipsoid to be ``smallest" according to some optimality criterion, among all ellipsoids containing the data. Typical examples of optimality criteria are ``minimum volume" and ``minimum sum of the squared semi-axes". A common situation arising in practice is the following: the set-valued data itself is described as set operations (e.g. union, intersection, or Minkowski sum) on other ellipsoids. In this paper, we consider computing the smallest ellipsoid that contains the so-called $p$-sum of finitely many ellipsoids, where $p\in[1,\infty]$. 

As a set operation, the $p$-sum of convex sets returns a new convex set, which loosely speaking, is a combination of the input convex sets. The notion of $p$-sum was introduced by Firey \cite{Firey1962} to generalize the Minkowski sum\footnote[2]{The Minkowski sum of two compact convex sets $\mathcal{X},\mathcal{Y}\subset\mathbb{R}^{d}$ is the set $\mathcal{X} +_{1} \mathcal{Y} := \{\bm{x}+\bm{y} \mid \bm{x} \in \mathcal{X}, \bm{y}\in\mathcal{Y}\}\subset\mathbb{R}^{d}$.} of convex bodies, and was studied in detail by Lutwak \cite{Lutwak1993,Lutwak1996}, who termed the resulting development as \emph{Brunn-Minkowski-Firey theory} (also known as \emph{$L_{p}$ Brunn-Minkowski theory}, see e.g., \cite[Ch. 9.1]{Schneider2014Book}). In this paper, we derive an outer ellipsoidal parameterization that is guaranteed to contain the $p$-sum of given ellipsoids, and then compute the smallest such outer ellipsoid.

Since the $p$-sum subsumes well-known set operations like the Minkowski sum as special case, we recover known results in the systems-control literature \cite{KurzhanskiValyi1997, schweppe1973, MaksarovNorton1996,KuzhanskiVaraiyaBook2014} about the minimum trace and minimum volume outer ellipsoids of such sets, by specializing our optimal parameterization of the $p$-sum of ellipsoids. Furthermore, based on our analytical results, we propose a fixed point recursion to compute the minimum volume outer ellipsoid of the $p$-sum for any real $p\in[1,\infty]$, that has fast convergence. The proposed algorithm not only enables computation for the novel convex uncertainty models (for $p>1$), it also entails orders of magnitude faster runtime compared to the existing semidefinite programming approach for the $p=1$ case. Thus, the contribution of this paper is twofold: (i) generalizing several existing results in the literature on outer ellipsoidal parameterization (Section III) of a convex set obtained as set operations on given ellipsoids, and analyzing its optimality (Section IV); (ii) deriving numerical algorithms (Section V) to compute the minimum volume outer ellipsoid containing the $p$-sum of ellipsoids.

A salient feature of the proposed minimum volume (constrained to a parametric family that we construct) outer ellipsoid algorithm is that it does not require extra parameterized ellipsoids, unlike many other algorithms available in the literature \cite{KuzhanskiVaraiyaBook2014,chernousko2004properties}. Put differently, the proposed algorithms directly process the data of the problem to return the outer approximation without any extra construction.

To illustrate the numerical algorithms derived in this paper, we compute (Section VI) the smallest outer ellipsoidal approximations for the (forward) reach sets of a discrete-time linear control system subject to set-valued uncertainties in its initial conditions and control. When the initial condition and control sets are ellipsoidal, they model weighted norm bounded uncertainties, and at each time, we are led to compute the smallest outer ellipsoid for the Minkowski sum ($p=1$ case). It is found that by specializing the proposed algorithms for $p=1$, we can lower the computational runtime by more than two orders of magnitude compared to the current state-of-the-art, which is to reformulate and solve the same via semidefinite programming. For $p>1$, the initial conditions and controls belong to $p$-sums of ellipsoidal sets, which are convex but not ellipsoidal, in general. In this case too, the proposed algorithms enable computing the smallest outer ellipsoids for the reach sets.

\section{Preliminaries}

\subsubsection{Convex Geometry}
The support function $h_{\mathcal{K}}(\cdot)$ of a compact convex set $\mathcal{K}\subset\mathbb{R}^{d}$, is 
\begin{align}
	h_{\mathcal{K}}(\bm{y}) := \sup \big\{\langle \bm{x}, \bm{y} \rangle \:\mid\: \bm{x}\in\mathcal{K}, \: \bm{y}\in\mathbb{R}^{d}\big\},
	\label{SptFnDef}
\end{align}
where $\langle \cdot, \cdot \rangle$ denotes the standard Euclidean inner product. The function $h_{\mathcal{K}} : \mathcal{K} \mapsto \mathbb{R}$, and can be viewed geometrically as the (signed) perpendicular distance from the origin to the supporting hyperplane of $\mathcal{K}$, which has the outer normal vector $\bm{y}$. Thus, the support function returns negative value if and only if the normal vector $\bm{y}$ points into the open halfspace containing the origin. The support function $h_{\mathcal{K}}(\cdot)$ can also be seen as (see e.g., \cite[Theorem 13.2]{RockafellerBook1970}) the Legendre-Fenchel conjugate of the indicator function of the set $\mathcal{K}$, and thus uniquely determines the set $\mathcal{K}$.

The following properties of the support function are well-known: \\
(i) convexity: $h_{\mathcal{K}}(\bm{y})$ is a convex function in $\bm{y}\in\mathbb{R}^{d}$,\\
(ii) positive homogeneity: $h_{\mathcal{K}}(\alpha\bm{y}) = \alpha h_{\mathcal{K}}(\bm{y})$ for $\alpha > 0$,\\
(iii) sub-additivity: $h_{\mathcal{K}}(\bm{y}+\bm{z}) \leq h_{\mathcal{K}}(\bm{y}) + h_{\mathcal{K}}(\bm{z})$ for $\bm{y},\bm{z}\in\mathbb{R}^{d}$,\\
(iv) inclusion: given compact convex sets $\mathcal{K}_{1}$ and $\mathcal{K}_{2}$, the inclusion  $\mathcal{K}_{1} \subseteq \mathcal{K}_{2}$ holds if and only if $h_{\mathcal{K}_{1}}(\bm{y}) \leq h_{\mathcal{K}_{2}}(\bm{y})$ for all $\bm{y}\in\mathbb{R}^{d}$,\\
(v) affine transformation: $h_{\bm{A}\mathcal{K} + \bm{b}}(\bm{y}) = h_{\mathcal{K}}(\bm{A}^{\top}\bm{y}) + \langle \bm{b}, \bm{y} \rangle$, for $\bm{A}\in\mathbb{R}^{d\times d}, \bm{b}\in\mathbb{R}^{d}, \bm{y}\in\mathbb{R}^{d}$.
\begin{definition}\label{pSumDefn}
($p$-\textbf{Sum of convex sets})	Given compact convex sets $\mathcal{K}_{1},\mathcal{K}_{2}\subset \mathbb{R}^{d}$, their $p$-sum \cite{Firey1962} is a new compact convex set $\mathcal{K}\subset \mathbb{R}^{d}$ defined via its support function  
\begin{align}
	h_{\mathcal{K}}(\bm{y}) = \left(h_{\mathcal{K}_{1}}^{p}(\bm{y}) \: + \: h_{\mathcal{K}_{2}}^{p}(\bm{y})\right)^{\frac{1}{p}}, \quad 1 \leq p \leq \infty,
	\label{FireypSum}
\end{align}
and we write $\mathcal{K} := \mathcal{K}_{1} +_{p} \mathcal{K}_{2}$.
\end{definition}
\begin{figure}[t]
 \centering
\includegraphics[width=\linewidth]{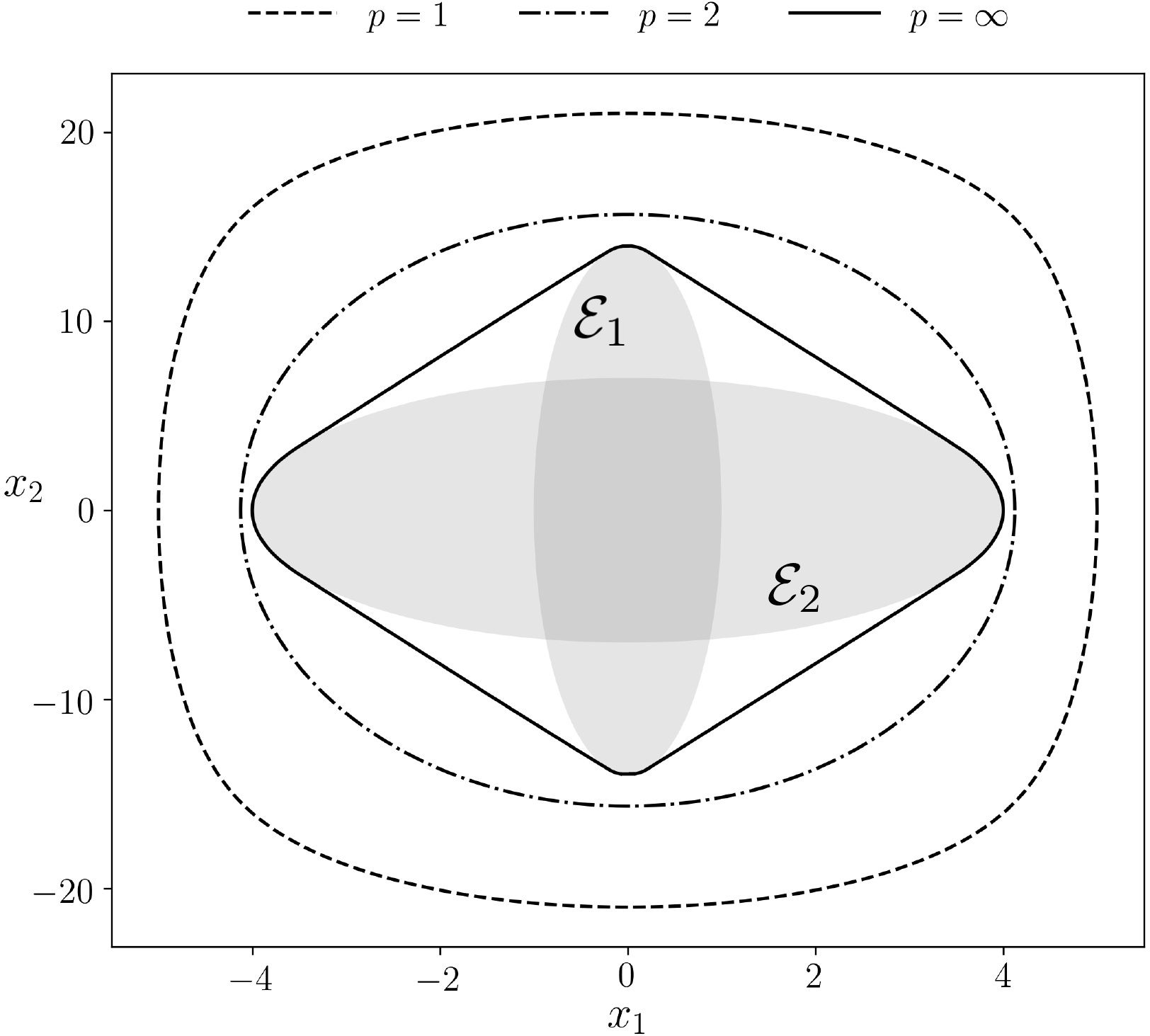}
 \caption{The $p$-sum of two input ellipses $\mathcal{E}_{1}$ and $\mathcal{E}_{2}$ (\emph{filled gray}) are shown for $p=1,2,\infty$. For varying $p \in [1,\infty]$, the $p$-sum $\mathcal{E}_{1} +_{p}\mathcal{E}_{2}$ defines a nested sequence of convex sets -- the outermost being the Minkowski sum ($p=1$, \emph{dashed boundary}), and the innermost being the convex hull of the union ($p=\infty$, \emph{solid boundary}) of $\mathcal{E}_{1}$ and $\mathcal{E}_{2}$. In this case, the $p$-sum is an ellipse only for $p=2$ (\emph{dash-dotted boundary}).}
 \label{pSumOf2Ellipses}
\end{figure}
Special cases of the $p$-sum are encountered frequently in practice. For example, when $p=1$, the set $\mathcal{K}=\mathcal{K}_{1} +_{1} \mathcal{K}_{2}$ is the Minkowski sum of $\mathcal{K}_{1}$ and $\mathcal{K}_{2}$, and \[h_{\mathcal{K}_{1} +_{1} \mathcal{K}_{2}}(\cdot) = h_{\mathcal{K}_{1}}(\cdot) + h_{\mathcal{K}_{2}}(\cdot).\] When $p=\infty$, the set $\mathcal{K}=\mathcal{K}_{1} +_{\infty} \mathcal{K}_{2}$ is the convex hull of the union of $\mathcal{K}_{1}$ and $\mathcal{K}_{2}$, and \[h_{\mathcal{K}_{1} +_{\infty} \mathcal{K}_{2}}(\cdot) = \max\big\{h_{\mathcal{K}_{1}}(\cdot), h_{\mathcal{K}_{2}}(\cdot)\big\}.\] For $1 \leq p < q \leq \infty$, we have the inclusion \cite[p. 20]{Firey1962} (See Fig. \ref{pSumOf2Ellipses}):
\begin{eqnarray}
\mathcal{K}_{1} \cup \mathcal{K}_{2} \subseteq \mathcal{K}_{1} +_{\infty} \mathcal{K}_{2} \subseteq \hdots &\subseteq \mathcal{K}_{1} +_{q} \mathcal{K}_{2} \subseteq \mathcal{K}_{1} +_{p} \mathcal{K}_{2} \nonumber\\
&\subseteq \hdots \subseteq \mathcal{K}_{1} +_{1} \mathcal{K}_{2},
\label{pSumInclusion}	
\end{eqnarray}
which follows (Appendix \ref{AppendixSetInclusionpSum}) from the support function inequality 
\[h_{\mathcal{K}_{1} +_{q} \mathcal{K}_{2}}(\cdot) \leq h_{\mathcal{K}_{1} +_{p} \mathcal{K}_{2}}(\cdot), \quad 1 \leq p < q \leq \infty.\]
From Definition \ref{pSumDefn}, it is easy to see that the $p$-sum is commutative and associative, that is,
\begin{subequations}\label{CommuAsso}
\begin{align}
\label{Commu}
	\mathcal{K}_{1} +_{p} \mathcal{K}_{2} &= \mathcal{K}_{2} +_{p} \mathcal{K}_{1},\\
\label{Asso}	
\left(\mathcal{K}_{1} +_{p} \mathcal{K}_{2}\right) +_{p} \mathcal{K}_{3} &= \mathcal{K}_{1} +_{p} \left(\mathcal{K}_{2} +_{p} \mathcal{K}_{3}\right),
\end{align}
\end{subequations}
where the compact sets $\mathcal{K}_{1}, \mathcal{K}_{2}, \mathcal{K}_{3}$ are convex. Furthermore, linear transformation is distributive over $p$-sum, i.e., 
\begin{eqnarray}
\label{DistributiveLinTrans}
\bm{A}\left(\mathcal{K}_{1} +_{p} \mathcal{K}_{2}\right) = \bm{A}\mathcal{K}_{1} +_{p} \bm{A}\mathcal{K}_{2}, \quad \bm{A}\in\mathbb{R}^{d\times d},	
\end{eqnarray}
which is immediate from the aforesaid property (v) and equation (\ref{FireypSum}). We mention here that it is often convenient to express $h_{\mathcal{K}}(\cdot)$ as function of the unit vector $\bm{y}/\parallel \bm{y}\parallel_{2}$ in $\mathbb{R}^{d}$ (see Fig. \ref{SptFnOfpSumOf2Ellipses}).

\begin{remark}
At first glance, it might seem odd that the Minkowski sum is defined pointwise as $\mathcal{X} +_{1} \mathcal{Y} := \{\bm{x}+\bm{y} \mid \bm{x} \in \mathcal{X}, \bm{y}\in\mathcal{Y}\}$, which remains well-defined for $\mathcal{X},\mathcal{Y}$ compact (not necessarily convex), but the $p$-sum in Definition \ref{pSumDefn} is given via support functions and requires the summand sets to be convex. Indeed, a pointwise definition for the $p$-sum was proposed in \cite{lutwak2012brunn} for compact summand sets $\mathcal{X}$ and $\mathcal{Y}$, given by $\mathcal{X} +_{p} \mathcal{Y} := \{(1-\mu)^{1/p^{\prime}}\bm{x}+\mu^{1/p^{\prime}}\bm{y} \mid \bm{x} \in \mathcal{X}, \bm{y}\in\mathcal{Y}, 0\leq\mu\leq 1\}$, where $p^{\prime}$ denotes the H\"{o}lder conjugate of $p$, i.e., $\frac{1}{p}+\frac{1}{p^{\prime}}=1$. This pointwise definition was shown to reduce to Definition \ref{pSumDefn} provided the compact summands $\mathcal{X},\mathcal{Y}$ are also convex. 
\end{remark}

\subsubsection{Ellipsoids}
Let $\mathbb{S}_{+}^{d}$ be the cone of $d\times d$ symmetric positive definite matrices. For an ellipsoid with center $\bm{q}\in\mathbb{R}^{d}$ and shape matrix $\bm{Q} \in \mathbb{S}_{+}^{d}$, denoted by \[\mathcal{E}\left(\bm{q}, \bm{Q}\right) := \{\bm{x} \in \mathbb{R}^{d} : \left(\bm{x} - \bm{q}\right)^{\top}\bm{Q}^{-1}\left(\bm{x} - \bm{q}\right) \leq 1\},\]
(\ref{SptFnDef}) reduces to
\begin{align}
	h_{\mathcal{E}\left(\bm{q}, \bm{Q}\right)}(\bm{y}) = \langle \bm{q}, \bm{y} \rangle \: + \: \sqrt{\langle\bm{Q}\bm{y}, \bm{y}\rangle}. 
\label{SptFnEllipsoid}	
\end{align}
Furthermore, the volume of the ellipsoid $\mathcal{E}\left(\bm{q}, \bm{Q}\right)$ is given by 
\begin{eqnarray}
\vol\left(\mathcal{E}\left(\bm{q},\bm{Q}\right)\right) = \frac{\vol\left(\mathcal{B}_{1}^{d}\right)}{\sqrt{\det\left(\bm{Q}^{-1}\right)}} = \frac{\pi^{\frac{d}{2}}}{\Gamma\left(\frac{d}{2} + 1\right)}\sqrt{\det\left(\bm{Q}\right)},
\label{VolEllipsoid}	
\end{eqnarray}
where $\mathcal{B}_{1}^{d}$ denotes the $d$-dimensional unit ball, and $\Gamma(\cdot)$ denotes the Gamma function, i.e., $\Gamma(s) := \int_{0}^{\infty}\eta^{s-1}\exp\left(-\eta\right)\differential\eta$, provided the real part of $s>0$.

\begin{figure}[t]
 \centering
\includegraphics[width=\linewidth]{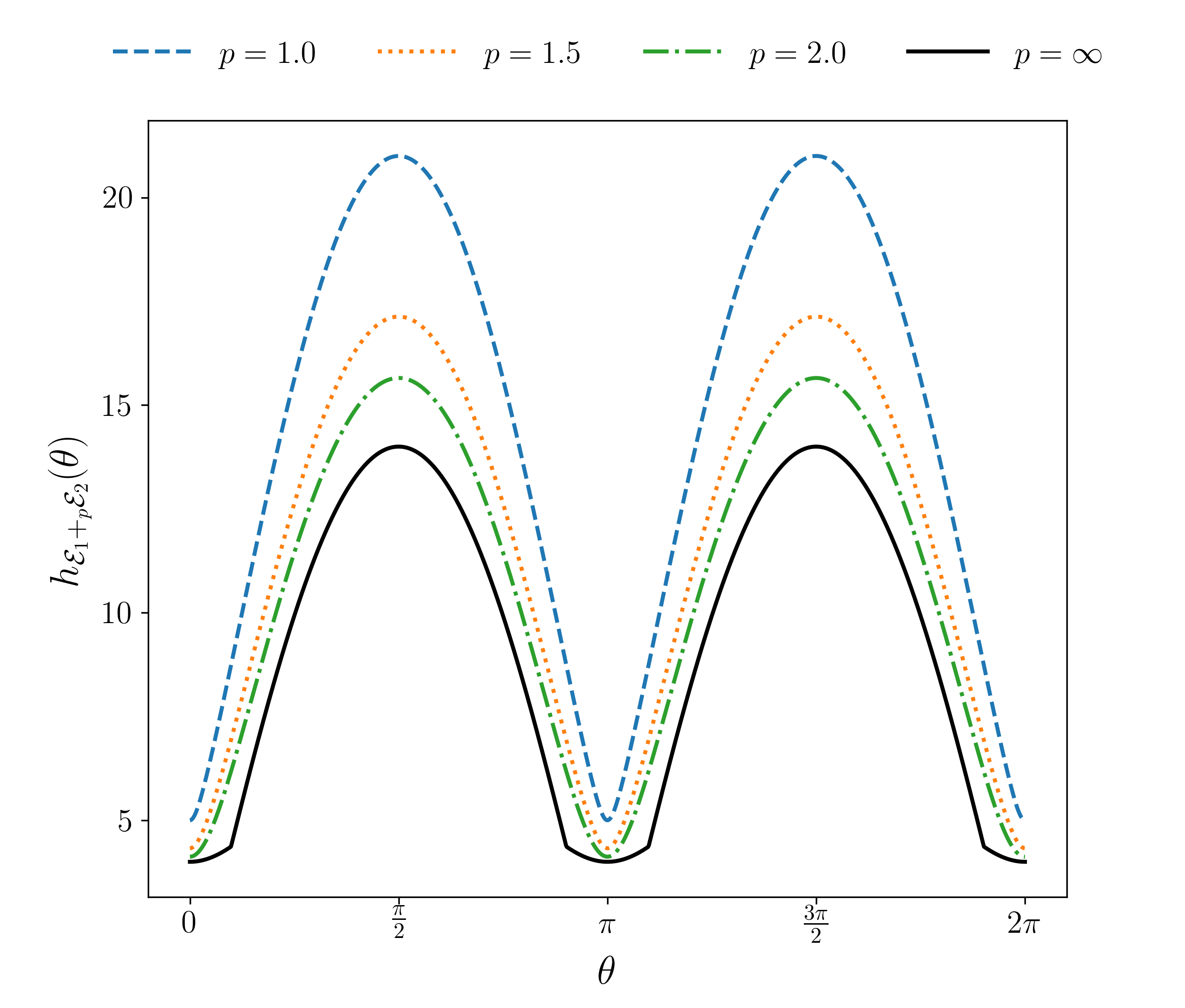}
 \caption{The support function $h_{\mathcal{E}_{1}+_{p} \mathcal{E}_{2}}(\cdot)$ for the $p$-sum of two input ellipses $\mathcal{E}_{1}$ and $\mathcal{E}_{2}$ are shown for different values of $p\geq 1$, where $\mathcal{E}_{i}:=\big\{(x,y)\in\mathbb{R}^{2} \mid \frac{x^{2}}{a_{i}^{2}}+\frac{y^{2}}{b_{i}^{2}}\leq 1\big\}$, $i=1,2$, with $a_{1}=4,b_{1}=7,a_{2}=1,b_{2}=14$. The corresponding $p$-sum sets for $p=1,2,\infty$ are as in Fig. 1. In this case, $h_{\mathcal{E}_{1}+_{p} \mathcal{E}_{2}}(\theta) = \left(\sum_{i=1,2}(a_{i}^{2}\cos^{2}\theta + b_{i}^{2}\sin^{2}\theta)^{p/2}\right)^{1/p}$ for $p\in[1,\infty)$, and $h_{\mathcal{E}_{1}+_{\infty} \mathcal{E}_{2}}(\theta)=\max_{i=1,2}(a_{i}^{2}\cos^{2}\theta + b_{i}^{2}\sin^{2}\theta)^{1/2}$, where $\theta\in[0,2\pi)$.}
 \label{SptFnOfpSumOf2Ellipses}
\end{figure}

An alternative ellipsoidal parameterization can be obtained via a matrix-vector-scalar triple $(\bm{A},\bm{b},c)$ encoding the quadratic form, i.e., $\mathcal{E}(\bm{A},\bm{b},c) := \{\bm{x} \in \mathbb{R}^{d} : \bm{x}^{\top}\bm{A}\bm{x} + 2\bm{x}^{\top}\bm{b} + c \leq 0\}$. The following relations among $(\bm{A},\bm{b},c)$ and $(\bm{q},\bm{Q})$ parameterizations for ellipsoid will be useful in the later part of this paper:
\begin{subequations}\label{AbcandQq}
\begin{align}
\label{Qq2Abc}
	&\bm{A} = \bm{Q}^{-1},\quad \bm{b} = -\bm{Q}^{-1}\bm{q},\quad c = \bm{q}^{\top}\bm{Q}^{-1}\bm{q} - 1,\\
\label{Ab2Qq}		
&\bm{Q} = \bm{A}^{-1}, \quad \bm{q} = -\bm{Q}\bm{b}.
\end{align}
\end{subequations}

\section{Parameterized Outer Ellipsoid}
Given two ellipsoids $\mathcal{E}(\bm{q}_{1},\bm{Q}_{1}),\mathcal{E}(\bm{q}_{2},\bm{Q}_{2})\subset \mathbb{R}^{d}$, consider their $p$-sum
\begin{alignat}{2}
	\mathcal{E}(\bm{q}_{1},\bm{Q}_{1}) +_{p} \mathcal{E}(\bm{q}_{2},\bm{Q}_{2}), \quad 1\leq p \leq \infty,
	\label{pSumEll}
\end{alignat}
which is convex but not an ellipsoid in general (see Fig. \ref{pSumOf2Ellipses} and \ref{SptFnOfpSumOf2Ellipses}). We want to determine an ellipsoid $\mathcal{E}(\bm{q},\bm{Q})\subset\mathbb{R}^{d}$, as function of the input ellipsoids, that is guaranteed to contain the $p$-sum (\ref{pSumEll}). For this to happen, we must have (from (\ref{FireypSum}) and property (iv) in Section II.1)
%\begin{subequations}
	\begin{alignat}{2}
		&h_{\mathcal{E}\left(\bm{q}, \bm{Q}\right)}^{p}(\bm{y}) \geq h_{\mathcal{E}\left(\bm{q}_{1}, \bm{Q}_{1}\right)}^{p}(\bm{y}) + h_{\mathcal{E}\left(\bm{q}_{2}, \bm{Q}_{2}\right)}^{p}(\bm{y}), 
\nonumber\\
\vspace*{0.01in}\nonumber\\
\stackrel{(\ref{SptFnEllipsoid})}{\Rightarrow} &\left(\bm{y}^{\top}\bm{q} +\!\sqrt{\bm{y}^{\top}\bm{Q}\bm{y}}\right)^{p} \geq \displaystyle\sum_{i=1,2}\!\!\left(\bm{y}^{\top}\bm{q}_{i}+\!\sqrt{\bm{y}^{\top}\bm{Q}_{i}\bm{y}}\right)^{p},
\label{SptFnEllIneq}
	\end{alignat}
%\end{subequations}
for all $\bm{y}\in\mathbb{R}^{d}$. In the rest of this paper, we make the following assumption.

\begin{assumption}\label{AssumpCenterZero}
	The center vectors for the summand ellipsoids in a $p$-sum are assumed to be zero, i.e., $\bm{q}_{1}=\bm{q}_{2}=\bm{0}$.
\end{assumption}

Under Assumption (\ref{AssumpCenterZero}), and $p=2$, it follows from (\ref{SptFnEllipsoid}) and (\ref{SptFnEllIneq}) that (\ref{pSumEll}) is an ellipsoid $\mathcal{E}\left(\bm{0},\bm{Q}_{1}+\bm{Q}_{2}\right)$. 

For $p\neq 2$, the convex set (\ref{pSumEll}) is again not an ellipsoid in general, but we can parameterize an outer ellipsoid $\mathcal{E}\left(\bm{0},\bm{Q}\right)\supseteq\mathcal{E}(\bm{0},\bm{Q}_{1}) +_{p} \mathcal{E}(\bm{0},\bm{Q}_{2})$ as follows. 

For $\alpha,\gamma > 1$, let 
\begin{eqnarray}
\bm{Q} := \alpha\bm{Q}_{1} + \gamma\bm{Q}_{2}, \; \text{and} \; g_{i}^{2} := \bm{y}^{\top}\bm{Q}_{i}\bm{y} \geq 0, \quad i=1,2.
\label{Qalphagamma}	
\end{eqnarray}
For the time being, we think of $\alpha,\gamma > 1$ as free parameters. We will see that it is possible to re-parameterize $\bm{Q}$ in terms of a single parameter $\beta >0$, by expressing both $\alpha$ and $\gamma$ as appropriate functions of $\beta$, while guaranteeing the inclusion of the $p$-sum in $\mathcal{E}\left(\bm{0},\bm{Q}\right)$.

With the standing assumptions $\bm{q}_{1}=\bm{q}_{2}=\bm{0}$, we can re-write (\ref{SptFnEllIneq}) as 
\begin{eqnarray}
\left(\alpha g_{1}^{2} + \gamma g_{2}^{2}\right)^{\frac{p}{2}}	&\:\geq\: &g_{1}^{p} + g_{2}^{p},\nonumber\\
\Rightarrow \left(\alpha g_{1}^{2} + \gamma g_{2}^{2}\right)^{p}	&\:\geq\:& g_{1}^{2p} + g_{2}^{2p} + 2 g_{1}^{p} g_{2}^{p}.
\label{g1g2inequality}
\end{eqnarray}
To proceed further, we need the following Lemma.
\begin{lemma}
A convex function $f(\cdot)$ with $f(0)=0$ is super-additive on $[0,\infty)$, i.e., $f(x+y) \geq f(x) + f(y)$ for all $x,y\geq 0$.
\end{lemma}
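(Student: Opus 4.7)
The plan is to use the definition of convexity directly, writing each of $x$ and $y$ as a convex combination of $0$ and $x+y$, and then exploiting the hypothesis $f(0)=0$. This is a classical "chord lies above at endpoints, below in the middle" argument, turned into an additivity inequality.

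First I would dispose of the degenerate case $x=y=0$: both sides of the proposed inequality equal $f(0)=0$, so equality holds trivially. From here on I assume $x+y>0$, which is legal since $x,y\geq 0$.

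Next, observe the convex combinations
\begin{align*}
x &= \frac{y}{x+y}\cdot 0 \;+\; \frac{x}{x+y}\cdot (x+y),\\
y &= \frac{x}{x+y}\cdot 0 \;+\; \frac{y}{x+y}\cdot (x+y),
\end{align*}
in which the coefficients are nonnegative and sum to $1$. Applying the convexity of $f$ to each of these convex combinations and then substituting $f(0)=0$ yields
\begin{align*}
f(x) &\leq \frac{x}{x+y}\,f(x+y),\\
f(y) &\leq \frac{y}{x+y}\,f(x+y).
\end{align*}
Adding these two inequalities gives $f(x)+f(y)\leq f(x+y)$, which is super-additivity on $[0,\infty)$.

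There is no real obstacle: the only subtlety is handling $x+y=0$ separately so that the fractions $x/(x+y)$ and $y/(x+y)$ are well-defined, and remembering that $f(0)=0$ is essential (otherwise the standard convexity inequality would leave an unwanted $f(0)$ term on the right). The proof uses only the definition of convexity and the given normalization, so it extends immediately to vector arguments in a convex cone if ever needed, though the statement here only requires the scalar version on $[0,\infty)$.
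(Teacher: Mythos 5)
Your proof is correct and is essentially the same argument as the paper's: the paper first records the sub-homogeneity inequality $f(\lambda z)\leq\lambda f(z)$ for $\lambda\in[0,1]$ and then applies it with $\lambda = x/(x+y)$ and $\lambda = y/(x+y)$ at the point $x+y$, which is exactly your direct convex-combination step written in two stages. Your explicit handling of the degenerate case $x+y=0$ is a small tidiness improvement over the paper's version, but the substance is identical.
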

\begin{proof}
For $0 \leq \lambda \leq 1$, by convexity
\[f(\lambda x) = f(\lambda x + (1-\lambda)0) \leq \lambda f(x) + (1-\lambda)f(0) = \lambda f(x).\]	
Therefore, we get
\[f(x) = f\left(\frac{x}{x+y}(x+y)\right) \leq \frac{x}{x+y}f(x+y),\]
and \[f(y) = f\left(\frac{y}{x+y}(x+y)\right) \leq \frac{y}{x+y}f(x+y).\]
Adding the last two inequalities, $f(x) + f(y) \leq f(x+y)$.
\end{proof}
In Theorem 1 below, we use Lemma 1 to derive an explicit parameterization of the shape matrix $\bm{Q}(\beta) \in \mathbb{S}^{d}_{+}$, $\beta>0$, that guarantees an outer ellipsoidal containment of the $p$-sum \[\mathcal{E}(\bm{0},\bm{Q}_{1}) +_{p} \mathcal{E}(\bm{0},\bm{Q}_{2}).\]
%Our parametrization given in the Theorem 1 below, and the results we derive thereafter in Section \ref{OptParaSection}, generalize several existing results in the systems-control literature. 
\begin{theorem}
Given a scalar $\beta > 0$, and a pair of matrices $\bm{Q}_{1},\bm{Q}_{2} \in \mathbb{S}_{+}^{d}$, let
\begin{eqnarray}
	\bm{Q}(\beta) = \left(\!\!1 + \displaystyle\frac{1}{\beta} \!\right)^{\!\!\frac{1}{p}}\!\!\bm{Q}_{1} + \left(1 + \beta\right)^{\frac{1}{p}}\bm{Q}_{2}, \: p\in[1,\infty)\setminus\{2\}.
\label{Qbetap}	
\end{eqnarray}
Then $\mathcal{E}\left(\bm{0},\bm{Q}(\beta)\right)\supseteq\mathcal{E}(\bm{0},\bm{Q}_{1}) +_{p} \mathcal{E}(\bm{0},\bm{Q}_{2})$.	
\end{theorem}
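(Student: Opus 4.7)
The strategy is to reduce the set inclusion to a pointwise support-function inequality via property (iv) of Section II.1, and then settle that inequality by combining Lemma 1 with a single AM--GM bound. The parameterization (\ref{Qbetap}) is designed so that the coupling between the two coefficients makes these two ingredients compose exactly.

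Concretely, under Assumption \ref{AssumpCenterZero}, set $g_{i}:=\sqrt{\bm{y}^{\top}\bm{Q}_{i}\bm{y}}\geq 0$ for $i=1,2$, and let $\alpha:=(1+1/\beta)^{1/p}$, $\gamma:=(1+\beta)^{1/p}$, both exceeding $1$ since $\beta>0$. From (\ref{SptFnEllipsoid}) the support function of the candidate ellipsoid at $\bm{y}$ is $\sqrt{\alpha g_{1}^{2}+\gamma g_{2}^{2}}$, and from (\ref{FireypSum}) together with (\ref{SptFnEllipsoid}) the support function of the $p$-sum is $(g_{1}^{p}+g_{2}^{p})^{1/p}$. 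Property (iv) therefore makes the theorem equivalent to the scalar inequality
\begin{equation*}
(\alpha g_{1}^{2}+\gamma g_{2}^{2})^{p/2}\;\geq\;g_{1}^{p}+g_{2}^{p}\qquad\text{for all }g_{1},g_{2}\geq 0,
\end{equation*}
which, after squaring, is exactly the inequality (\ref{g1g2inequality}) already isolated in the text.

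For the squared form I would apply Lemma 1 to $f(x)=x^{p}$, which is convex on $[0,\infty)$ with $f(0)=0$ for every $p\geq 1$, and obtain by super-additivity
\begin{equation*}
(\alpha g_{1}^{2}+\gamma g_{2}^{2})^{p}\;\geq\;\alpha^{p}g_{1}^{2p}+\gamma^{p}g_{2}^{2p}.
\end{equation*}
Comparing with the target $g_{1}^{2p}+g_{2}^{2p}+2g_{1}^{p}g_{2}^{p}$ reduces the task to $(\alpha^{p}-1)g_{1}^{2p}+(\gamma^{p}-1)g_{2}^{2p}\geq 2g_{1}^{p}g_{2}^{p}$, which by AM--GM is implied by $(\alpha^{p}-1)(\gamma^{p}-1)\geq 1$. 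But (\ref{Qbetap}) is calibrated precisely so that $\alpha^{p}-1=1/\beta$ and $\gamma^{p}-1=\beta$, making this product equal to $1$. Taking square roots and invoking property (iv) then closes the argument.

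The main obstacle is not any individual step but rather recognizing the coupling encoded in (\ref{Qbetap}): one must reverse-engineer the two coefficients so that the AM--GM residue reproduces exactly the cross term $2g_{1}^{p}g_{2}^{p}$ appearing when $g_{1}^{p}+g_{2}^{p}$ is squared. Once one imposes $(\alpha^{p}-1)(\gamma^{p}-1)=1$, a one-parameter family indexed by $\beta>0$ drops out and yields the stated $\bm{Q}(\beta)$ with no further computation. The exclusion $p=2$ is cosmetic for the inclusion itself; it becomes relevant only downstream, because at $p=2$ one already has the sharper identity $\mathcal{E}(\bm{0},\bm{Q}_{1})+_{2}\mathcal{E}(\bm{0},\bm{Q}_{2})=\mathcal{E}(\bm{0},\bm{Q}_{1}+\bm{Q}_{2})$, and the parametric outer bound $\bm{Q}(\beta)$ would then be strictly loose.
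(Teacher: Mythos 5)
Your proof is correct and follows essentially the same route as the paper: reduce the inclusion to the support-function inequality, apply Lemma 1 to $f(x)=x^{p}$ to get $(\alpha g_{1}^{2}+\gamma g_{2}^{2})^{p}\geq\alpha^{p}g_{1}^{2p}+\gamma^{p}g_{2}^{2p}$, and absorb the cross term $2g_{1}^{p}g_{2}^{p}$ under the calibration $(\alpha^{p}-1)(\gamma^{p}-1)\geq 1$. The only (cosmetic) difference is that you dispatch the final step by AM--GM where the paper multiplies through by $(\alpha^{p}-1)$ and completes the square --- the same inequality in different clothing --- and your version is arguably the cleaner of the two.
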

\begin{proof}
Since $f(x) := x^{p}$ is convex on $[0,\infty)$ for $1\leq p < \infty$, Lemma 1 yields
	\[\left(\alpha g_{1}^{2} + \gamma g_{2}^{2}\right)^{p} \:\geq\: \alpha^{p} g_{1}^{2p} + \gamma^{p} g_{2}^{2p},\]
or equivalently, 
\begin{eqnarray}
\left(\alpha g_{1}^{2} + \gamma g_{2}^{2}\right)^{p} = \alpha^{p} g_{1}^{2p} + \gamma^{p} g_{2}^{2p} + \xi, \; \text{where}\;\xi\geq0.
\label{xiEquality}
\end{eqnarray} 
Combining (\ref{g1g2inequality}) and (\ref{xiEquality}), we obtain
\begin{eqnarray}
\left(\alpha^{p}-1\right)\left(g_{1}^{p}\right)^{2} + \left(\gamma^{p}-1\right)\left(g_{2}^{p}\right)^{2} - 2 g_{1}^{p}g_{2}^{p} + \xi \geq 0.
\label{IntermedIneq}	
\end{eqnarray}
Since $\alpha > 1$, we have $\alpha^{p} > 1$ for $p\geq 1$. Therefore, multiplying both sides of (\ref{IntermedIneq}) by $(\alpha^{p} - 1)>0$, and then adding and subtracting $(g_{2}^{p})^{2}$, we get\begin{eqnarray}
	\left(\left(\alpha^{p}-1\right)g_{1}^{p} - g_{2}^{p}\right)^{2} + \left(\left(\alpha^{p}-1\right)\left(\gamma^{p}-1\right) - 1\right)(g_{2}^{p})^{2} \nonumber\\
	 + \left(\alpha^{p} - 1\right)\xi \geq 0.
\label{SumOfSquaresIneq}	 
\end{eqnarray}
A sufficient condition to satisfy the inequality (\ref{SumOfSquaresIneq}) is to choose $\alpha,\gamma>1$ such that
\[\left(\alpha^{p}-1\right)\left(\gamma^{p}-1\right) \geq 1.\]
Letting $\alpha^{p}-1 := \beta^{-1}$ and $\gamma^{p}-1 = \beta$, $\beta>0$, and using (\ref{Qalphagamma}), we arrive at (\ref{Qbetap}).
\end{proof}
\begin{remark}\label{ParamGeneralizationRemark}
	The parameterization (\ref{Qbetap}) generalizes the outer ellipsoidal parameterization containing the Minkowski sum ($p=1$ case), well-known in the systems-control literature (see e.g., \cite[p. 104]{KurzhanskiValyi1997},\cite{schweppe1973, MaksarovNorton1996}). For this special case $p=1$, a discussion about equivalent parameterizations can be found in \cite[Section II.A]{HalderMinkSum2018}.
\end{remark}

\section{Optimal Parameterization}\label{OptParaSection}
To reduce conservatism, it is desired that the parameterized outer ellipsoid $\mathcal{E}\left(\bm{0},\bm{Q}(\beta)\right)$ in Theorem 1 containing the $p$-sum, be as tight as possible. One way to promote ``tightness" is by minimizing the sum of the squared semi-axes lengths of $\mathcal{E}\left(\bm{0},\bm{Q}(\beta)\right)$, which amounts to minimizing $\tr\left(\bm{Q}(\beta)\right)$ over $\beta > 0$. Another possible way to promote ``tightness" is by minimizing the volume of $\mathcal{E}\left(\bm{0},\bm{Q}(\beta)\right)$, which, thanks to (\ref{VolEllipsoid}), amounts to minimizing $\log\:\det\left(\bm{Q}(\beta)\right)$. We next analyze these optimality criteria. A discussion on different optimality criteria promoting different notions of ``ellipsoidal tightness" can be found in \cite[p. 226]{chernousko2004properties}. 

\subsection{Minimum Trace Outer Ellipsoid}
We consider the optimization problem
\begin{eqnarray}
\underset{\beta > 0}{\text{minimize}}\quad\tr\left(\bm{Q}(\beta)\right),	
\end{eqnarray}
where $\bm{Q}(\beta)$ is given by (\ref{Qbetap}), and let \[\beta^{*}_{{\rm{tr}}} := \underset{\beta > 0}{\arg\min}\:\tr\left(\bm{Q}(\beta)\right).\]

Setting $\displaystyle\frac{\partial}{\partial\beta}\tr\left(\bm{Q}(\beta)\right) = 0$, and using the linearity of trace operator, straightforward calculation yields
\begin{eqnarray}
\beta^{*}_{{\rm{tr}}} = \left(\displaystyle\frac{\tr\left(\bm{Q}_{1}\right)}{\tr\left(\bm{Q}_{2}\right)}\right)^{\frac{p}{1+p}}, \quad p\in[1,\infty)\setminus\{2\},
\label{betamintr}	
\end{eqnarray}
and 
\begin{eqnarray*}
&&\frac{\partial^{2}}{\partial\beta^{2}}\tr\left(\bm{Q}(\beta)\right)\Big\vert_{\beta=\beta^{*}_{{\rm{tr}}}} \\
&=& \frac{1}{p}\left(\frac{1}{p}+1\right)\left(\beta^{*}_{{\rm{tr}}}\right)^{-\frac{1}{p}-2}\left(\beta^{*}_{{\rm{tr}}} + 1\right)^{\frac{1}{p}-1}\tr\left(\bm{Q}_{1}\right) > 0.	
\end{eqnarray*}
The formula (\ref{betamintr}) generalizes the previously known formula for $p=1$ case (minimum trace ellipsoid containing the Minkowski sum of two given ellipsoids) reported in \cite[Appendix A.2]{MaksarovNorton1996} and in \cite[Lemma 2.5.2(a)]{KurzhanskiValyi1997}.

\subsection{Minimum Volume Outer Ellipsoid}
Now we consider the optimization problem
\begin{eqnarray}
\underset{\beta > 0}{\text{minimize}}\quad\log\:\det\left(\bm{Q}(\beta)\right),	
\end{eqnarray}
where $\bm{Q}(\beta)$ is given by (\ref{Qbetap}), and let 
\begin{eqnarray}
	\beta^{*}_{{\rm{vol}}} := \underset{\beta > 0}{\arg\min}\:\log\:\det\left(\bm{Q}(\beta)\right).
	\label{minVolargmin}
\end{eqnarray}

To simplify the first order condition of optimality $\frac{\partial}{\partial\beta}\log\det\left(\bm{Q}(\beta)\right)=0$, we notice that the matrix $\bm{R}:= \bm{Q}_{1}^{-1}\bm{Q}_{2}$ is diagonalizable \cite[Section III.A, Lemma 1]{HalderMinkSum2018}, and denote its spectral decomposition as $\bm{R}:=\bm{S}\bm{\Lambda}\bm{S}^{-1}$. Further, let the eigenvalues of $\bm{R}$ be $\{\lambda_{i}\}_{i=1}^{d}$, which are all positive (see the discussion following Proposition 1 in \cite{HalderMinkSum2018}). Then direct calculation gives
\begin{eqnarray}
\displaystyle\frac{\partial}{\partial\beta}\log\det\left(\bm{Q}(\beta)\right) = \tr\left((\bm{Q}(\beta))^{-1}\displaystyle\frac{\partial}{\partial\beta}\bm{Q}(\beta)\right)\nonumber\\
= -\displaystyle\frac{1}{p\beta(1+\beta)}\:\tr\left(\left(\bm{I} + \beta^{\frac{1}{p}}\bm{R}\right)^{-1} \left(\bm{I} - \beta^{3-\frac{1}{p}}\bm{R}\right)\right)\nonumber\\
= -\displaystyle\frac{1}{p\beta(1+\beta)}\:\tr\left(\left(\bm{I} + \beta^{\frac{1}{p}}\bm{\Lambda}\right)^{-1} \left(\bm{I} - \beta^{3-\frac{1}{p}}\bm{\Lambda}\right)\right),
\label{FOOC}		
\end{eqnarray}
wherein the last step follows from substituting $\bm{I}=\bm{S}\bm{S}^{-1}$, $\bm{R}=\bm{S}\bm{\Lambda}\bm{S}^{-1}$, and using the invariance of trace of matrix product under cyclic permutation. 

Therefore, from (\ref{FOOC}), the first order optimality condition $\frac{\partial}{\partial\beta}\log\det\left(\bm{Q}(\beta)\right)=0$ is equivalent to the following nonlinear algebraic equation:
\begin{eqnarray}
\displaystyle\sum_{i=1}^{d} \displaystyle\frac{1 - \beta^{3-\frac{1}{p}}\lambda_{i}}{1 + \beta^{\frac{1}{p}}\lambda_{i}} = 0, \quad p\in[1,\infty)\setminus\{2\},
\label{DerivativeEqualsZero}	
\end{eqnarray}
to be solved for $\beta>0$, with known parameters $\lambda_{i}>0$, $i=1,\hdots,d$. If (\ref{DerivativeEqualsZero}) admits unique positive root (which seems non-obvious, and will be proved next), then it would indeed correspond to the argmin in (\ref{minVolargmin}) since
\begin{align}
	&\frac{\partial^{2}}{\partial\beta^{2}}\log\det\left(\bm{Q}(\beta)\right)\bigg\rvert_{\beta>0} \nonumber\\
= \frac{1}{p\beta(1+\beta)} &\displaystyle\sum_{i=1}^{d} \frac{(2-\frac{1}{p})\beta^{2}\lambda_{i}^{2} + \left[(3-\frac{1}{p})\beta^{2-\frac{1}{p}}+ \frac{1}{p}\beta^{\frac{1}{p}-1}\right]\lambda_{i}}{(1 + \beta\lambda_{i})^{2}}\nonumber\\
>0, &\qquad \text{for} \quad p \in[1,\infty)\setminus\{2\}. 
\label{SecondDerivativePositive}	
\end{align}
In the following Theorem, we establish the uniqueness of the positive root. 
\begin{theorem}
Given $\lambda_{i}>0$, $i=1,\hdots,d$, equation (\ref{DerivativeEqualsZero}) in variable $\beta$ admits unique positive root.	
\end{theorem}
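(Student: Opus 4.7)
The plan is to prove existence and uniqueness of the positive root of
$$F(\beta) := \sum_{i=1}^{d} \frac{1 - \beta^{3-1/p}\lambda_{i}}{1 + \beta^{1/p}\lambda_{i}}$$
separately, which together show that (\ref{DerivativeEqualsZero}) has precisely one positive solution.

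For existence I would examine the boundary behavior. Since $p \geq 1$, both exponents $1/p$ and $3 - 1/p$ are strictly positive, so as $\beta \downarrow 0^{+}$ each summand tends to $1$ and $F(0^{+}) = d > 0$. As $\beta \to \infty$, the leading behavior of the $i$-th summand is $-\beta^{3-2/p}$; since $p \geq 1$ gives $3 - 2/p \geq 1 > 0$, every summand diverges to $-\infty$, so $F(\beta) \to -\infty$. Continuity of $F$ on $(0,\infty)$ together with the intermediate value theorem then forces at least one positive root.

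For uniqueness, the cleanest route is to show $F$ is strictly decreasing on $(0,\infty)$. Setting $a := 1/p \in (0,1]$, $N_{i}(\beta) := 1 - \beta^{3-a}\lambda_{i}$, and $D_{i}(\beta) := 1 + \beta^{a}\lambda_{i}$, a direct computation yields
$$N_{i}'(\beta) D_{i}(\beta) - N_{i}(\beta) D_{i}'(\beta) = -(3-a)\beta^{2-a}\lambda_{i} - a\beta^{a-1}\lambda_{i} - (3-2a)\beta^{2}\lambda_{i}^{2}.$$
All three coefficients $(3-a)$, $a$, and $(3-2a)$ are strictly positive for $a \in (0,1]$, and $\lambda_{i},\beta > 0$, so the right-hand side is strictly negative; dividing by $D_{i}^{2} > 0$ shows each summand of $F$ is strictly decreasing, hence so is $F$. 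A strictly decreasing continuous function admits at most one zero, which together with existence yields uniqueness.

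The only delicate point I anticipate is the sign of the cross term $(3-2a)\beta^{2}\lambda_{i}^{2}$; it is handled by noting $p \geq 1$ forces $3 - 2/p \geq 1$. If that direct approach is deemed inelegant, an alternative uniqueness argument is to invoke the strict convexity of $\log\det(\bm{Q}(\beta))$ already established in (\ref{SecondDerivativePositive}): by (\ref{FOOC}) its derivative equals $-F(\beta)/(p\beta(1+\beta))$ and is strictly increasing on $(0,\infty)$, so it crosses zero at most once; since the prefactor is nonzero on $(0,\infty)$, the same conclusion transfers to $F(\beta)$.
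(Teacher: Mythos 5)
Your proof is correct, but it takes a genuinely different route from the paper. The paper rewrites (\ref{DerivativeEqualsZero}) by clearing denominators, expands the result in the variable $\zeta=\beta^{1/p}$ using elementary symmetric polynomials of the $1/\lambda_i$, observes that the coefficient sequence has exactly one sign change, and invokes Descartes' rule of signs to get existence and uniqueness in one stroke. You instead argue directly on the function $F(\beta)=\sum_i (1-\beta^{3-1/p}\lambda_i)/(1+\beta^{1/p}\lambda_i)$: the limits $F(0^+)=d>0$ and $F(\beta)\to-\infty$ (valid since $3-2/p\geq 1>0$ for $p\geq 1$) give existence via the intermediate value theorem, and the quotient-rule computation showing each summand is strictly decreasing (all three coefficients $3-a$, $a$, $3-2a$ positive for $a=1/p\in(0,1]$) gives uniqueness; I checked the algebra and it is right. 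Your approach is more elementary and arguably more robust: the paper's expression in $\zeta$ is a genuine polynomial only when $3p-1$ is an integer, so for general real $p$ one must appeal to the extension of Descartes' rule to generalized polynomials with real exponents, a subtlety the paper does not address and which your monotonicity argument avoids entirely. What the paper's route buys is an explicit algebraic normal form (\ref{ExpandedFormInxi}) that is reused to set up the root-bracketing and fixed-point machinery later in the paper. Your fallback uniqueness argument via the strict convexity in (\ref{SecondDerivativePositive}) is also sound and non-circular, since that inequality is established independently of the theorem; just note it supplies uniqueness only, so your separate existence step remains necessary.
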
 
\begin{proof}
We start by rewriting (\ref{DerivativeEqualsZero}) as
\begin{eqnarray}
\displaystyle\sum_{i=1}^{d}\left(\beta^{3-\frac{1}{p}}-\frac{1}{\lambda_{i}}\right)\displaystyle\prod_{\stackrel{j=1}{j\neq i}}^{d}\left(\beta^{\frac{1}{p}} + \frac{1}{\lambda_{j}}\right) = 0.
\label{AfterDivisionBylambdaprod}	
\end{eqnarray}
Now let $e_{r} \equiv e_{r}\left(\frac{1}{\lambda_{1}}, \hdots, \frac{1}{\lambda_{d}}\right)$ for $r=1,\hdots,d$, denote the $r$\textsuperscript{th} elementary symmetric polynomial \cite[Ch. 2.22]{HardyLittlewoodPolyaBook} in variables 	$\frac{1}{\lambda_{1}}, \hdots, \frac{1}{\lambda_{d}}$, that is,
\[e_{r} \equiv e_{r}\left(\frac{1}{\lambda_{1}}, \hdots, \frac{1}{\lambda_{d}}\right) := \displaystyle\sum_{1\leq i_{1} < i_{2} \hdots < i_{r} \leq d} \displaystyle\frac{1}{\lambda_{i_{1}}\hdots\lambda_{i_{r}}}.\]
For example, 
\[e_{1} =\!\!\displaystyle\sum_{1\leq i\leq d}\!\!\lambda_{i}^{-1}, \quad e_{2} =\!\!\displaystyle\sum_{1\leq i<j\leq d}\!\!\!\!\!\left(\lambda_{i}\lambda_{j}\right)^{-1}, \quad e_{d} = \!\!\left(\displaystyle\prod_{1\leq i\leq d}\!\!\lambda_{i}\right)^{\!\!-1},\]
and $e_{0}=1$ by convention. For all $r=1,\hdots,d$, we have $e_{r}>0$ since $\lambda_{1}, \hdots, \lambda_{d}$ are all positive. Letting $\zeta := \beta^{\frac{1}{p}}$, we write (\ref{AfterDivisionBylambdaprod}) in the expanded form
\begin{eqnarray}
\zeta^{3p-1}\bigg\{\displaystyle\sum_{r=1}^{d}\left(d-r+1\right)e_{r-1}\zeta^{d-r}\bigg\} \: - \: \displaystyle\sum_{r=1}^{d}re_{r}\zeta^{d-r} = 0.
\label{ExpandedFormInxi}	
\end{eqnarray}
We notice that (\ref{ExpandedFormInxi}) is a polynomial in $\zeta$, in which the coefficients undergo exactly one change in sign. Therefore, by Descartes' rule of sign, the equation (\ref{ExpandedFormInxi}) (equivalently (\ref{AfterDivisionBylambdaprod}) or (\ref{DerivativeEqualsZero})) admits unique positive root.
\end{proof}
Next, we give an algorithm to compute the unique positive root $\beta^{*}_{{\rm{vol}}}$ of the equation (\ref{DerivativeEqualsZero}), and show how the same can be used for reachability analysis for linear control systems.

%%%%%%%%%%%%%%%%%%%%%%%%%%%%%%%%%%%%%%%%%%%%%%%%%%%%%%%%%%%%%%%%%%%%%%%%%%%%%%%%

\begin{figure}[t]
 \centering
\includegraphics[width=\linewidth]{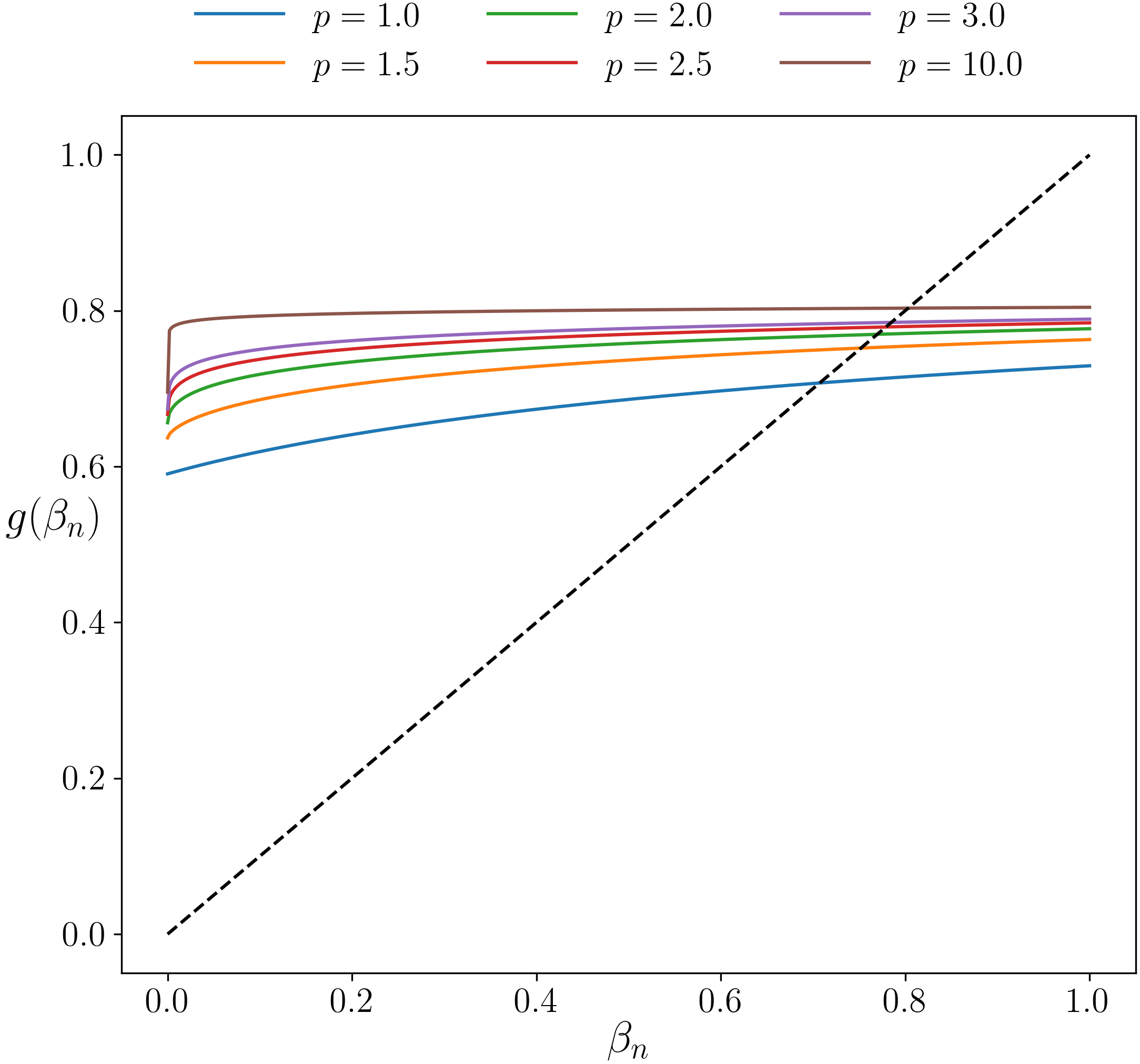}
 \caption{The unique positive fixed point $\beta^{*}_{\text{vol}}$ for the map $\beta \mapsto g(\beta)$ given by (\ref{FPE}), is shown for $p=1,1.5,2,2.5,3,10$, all with $d=3$, and with parameters $\{\lambda_{1},\lambda_{2},\lambda_{3}\}=\{5, 0.6, 3\}$. For any given $p\geq 1$, the fixed point $\beta^{*}_{\text{vol}}$ is the point of intersection between the dashed straight line and the corresponding solid curve $g(\beta)$.}
 \label{FixedPtFn}
\end{figure}

\section{Algorithms and Applications}

\subsection{Computing the Minimum Volume Outer Ellipsoid}
Thanks to the parameterization (\ref{Qbetap}), computing the minimum volume outer ellipsoid (MVOE) for the $p$-sum $\mathcal{E}(\bm{0},\bm{Q}_{1}) +_{p} \mathcal{E}(\bm{0},\bm{Q}_{2})$, is reduced to computing $\beta^{*}_{{\rm{vol}}}$ introduced in the previous Section. Motivated by the observation that the first order optimality criterion (\ref{DerivativeEqualsZero}) can be rearranged as
\begin{eqnarray}
\beta^{3-\frac{1}{p}}\displaystyle\sum_{i=1}^{d}\displaystyle\frac{\lambda_{i}}{1 + \beta^{\frac{1}{p}}\lambda_{i}} = \displaystyle\frac{1}{1 + \beta^{\frac{1}{p}}\lambda_{i}},
\label{MotivateFPE}	
\end{eqnarray}
we consider the fixed point recursion
\begin{eqnarray}
\beta_{n+1} = g(\beta_{n}) := \left(\displaystyle\frac{\displaystyle\sum_{i=1}^{d}\displaystyle\frac{1}{1 + \beta_{n}^{\frac{1}{p}}\lambda_{i}}}{\displaystyle\sum_{i=1}^{d}\displaystyle\frac{\lambda_{i}}{1 + \beta_{n}^{\frac{1}{p}}\lambda_{i}}}\right)^{\!\!\frac{p}{3p-1}},
\label{FPE}	
\end{eqnarray}
where $n=0,1,2,\hdots$, and $p \in[1,\infty)\setminus\{2\}$. Furthermore, $g : \mathbb{R}_{+} \mapsto \mathbb{R}_{+}$, i.e., the map $g$ is cone preserving. In the following Theorem, we show that the fixed point recursion (\ref{FPE}) converges to a unique positive root (Fig. \ref{FixedPtFn}), and is in fact contractive in the Hilbert metric \cite[Ch. 2]{LemmensNussbaumBook2012}. Thus, the recursion (\ref{FPE}) is indeed an efficient numerical algorithm to compute $\beta^{*}_{{\rm{vol}}}$. The recursion (\ref{FPE}) and the contraction proof below subsume our previous result \cite[Section IV.B]{HalderMinkSum2018} for the $p=1$ case (computing MVOE for the Minkowski sum).

\begin{theorem}\label{FPEthm}
Starting from any initial guess $\beta_{0} \in \mathbb{R}_{+}$, the recursion (\ref{FPE}) with fixed $p \in[1,\infty)\setminus\{2\}$, converges to a unique fixed point $\beta^{*}_{{\rm{vol}}}\in\mathbb{R}_{+}$, i.e., $\displaystyle\lim_{n\rightarrow\infty}g^{n}(\beta_{0})=\beta^{*}_{{\rm{vol}}}$.
\end{theorem}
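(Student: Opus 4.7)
The plan is to establish Theorem~\ref{FPEthm} via Banach's fixed point theorem applied to the map $g$ on $\mathbb{R}_{+}$ equipped with Hilbert's projective metric $d_{H}(x,y) := |\log x - \log y|$. Since $(\mathbb{R}_{+},d_{H})$ is a complete metric space, all that is needed is (a) that $g$ is cone-preserving, i.e. $g: \mathbb{R}_{+}\!\to\!\mathbb{R}_{+}$, which is immediate from (\ref{FPE}) because every summand in both the numerator and denominator is strictly positive for $\beta>0$ and $\lambda_{i}>0$; and (b) that $g$ is a strict contraction in $d_{H}$.

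First I would note that uniqueness of the fixed point comes essentially for free: a positive fixed point of $g$ is, by the algebraic rearrangement (\ref{MotivateFPE}) that motivated (\ref{FPE}), exactly a positive root of (\ref{DerivativeEqualsZero}), so Theorem~2 already guarantees at most one such fixed point in $\mathbb{R}_{+}$. Hence the contraction argument will identify that root as the global attractor.

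For the contraction, I would change coordinates to $u := \log\beta$ and set $\tilde{g}(u) := \log g(e^{u})$. Then $d_{H}$ pulls back to the standard Euclidean metric on $\mathbb{R}$, and $g$ is a $d_{H}$-contraction iff $\sup_{\beta>0}\left|\beta\, g'(\beta)/g(\beta)\right| \le c <1$. Writing $A(\beta) := \sum_{i}(1+\beta^{1/p}\lambda_{i})^{-1}$ and $B(\beta) := \sum_{i}\lambda_{i}(1+\beta^{1/p}\lambda_{i})^{-1}$, (\ref{FPE}) gives
\[
\frac{\beta g'(\beta)}{g(\beta)} = \frac{1}{3p-1}\left(\frac{\beta B'(\beta)}{B(\beta)} - \frac{\beta A'(\beta)}{A(\beta)}\right).
\]
Introducing the reparametrization $t_{i} := \beta^{1/p}\lambda_{i}>0$, both ratios $\beta A'/A$ and $\beta B'/B$ collapse to weighted averages of $t_{i}/(1+t_{i})\in(0,1)$, up to the common prefactor $-1/p$; elementary algebra (or a single application of Cauchy--Schwarz on the weighted sums) then bounds each ratio in absolute value by $1/p$, yielding
\[
\left|\frac{\beta g'(\beta)}{g(\beta)}\right| \;\le\; \frac{2}{p(3p-1)} \;<\; 1 \quad \text{for all } p\in[1,\infty)\setminus\{2\}.
\]
This is the main technical step and the only place that requires real computation; the rest follows by bookkeeping.

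With the contraction established, Banach's theorem gives a unique fixed point $\beta^{*}_{\mathrm{vol}}\in\mathbb{R}_{+}$ and global convergence $g^{n}(\beta_{0})\to\beta^{*}_{\mathrm{vol}}$ from any starting point $\beta_{0}>0$. The hard part is really the uniform-in-$\beta$ bound on $|\beta g'/g|$: naive term-by-term bounds give only $\le 2/(3p-1)$, which is exactly $1$ at $p=1$ and so fails to yield strict contraction precisely at the Minkowski-sum case of greatest interest. The trick is therefore to exploit the $1/p$ factor coming from differentiating $\beta^{1/p}$, and to recognize that $\beta A'/A$ and $\beta B'/B$ have the \emph{same} sign so that their difference---not their sum---controls $\beta g'/g$; this is what produces the strict bound above and recovers the $p=1$ contraction proof of \cite[Section~IV.B]{HalderMinkSum2018} as a special case.
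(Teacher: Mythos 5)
Your strategy---passing to $u=\log\beta$ and bounding the logarithmic derivative $\beta g'(\beta)/g(\beta)$ uniformly below $1$---is a legitimate and genuinely different route from the paper's, which instead writes $g=\phi\circ\psi$ with $\phi(x)=x^{p/(3p-1)}$ and $\psi(x)=\sum_i f_i/\sum_i\lambda_i f_i$, argues both are concave and increasing on $\mathbb{R}_{+}$, and invokes a general result (Krause, Thm.~2.1.11) that such maps contract the Hilbert metric. The problem is that your key displayed inequality is incorrect, and it fails exactly in the case you single out as critical. Two issues compound. First, since $\log g=\frac{p}{3p-1}(\log A-\log B)$, the prefactor in your identity for $\beta g'/g$ must be $\frac{p}{3p-1}$, not $\frac{1}{3p-1}$. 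Second, with $t_i=\beta^{1/p}\lambda_i$, each of $\beta A'/A$ and $\beta B'/B$ equals $-\frac{1}{p}$ times a weighted average of $t_i/(1+t_i)\in(0,1)$, hence lies in $(-1/p,0)$; their \emph{difference} therefore has absolute value strictly less than $1/p$, which gives
\[
\Big|\frac{\beta g'(\beta)}{g(\beta)}\Big| \;<\; \frac{p}{3p-1}\cdot\frac{1}{p} \;=\; \frac{1}{3p-1}\;\le\;\frac{1}{2}, \qquad p\ge 1.
\]
Your stated bound $\frac{2}{p(3p-1)}$ is instead what the triangle-inequality (sum) estimate produces when combined with the erroneous prefactor, and it equals exactly $1$ at $p=1$. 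So, as written, the proof does \emph{not} establish a strict contraction for the Minkowski-sum case, even though your closing paragraph asserts that the same-sign observation is precisely what rescues $p=1$. The argument is salvageable: carry the difference bound through with the correct prefactor as above, and note that the resulting constant $\frac{1}{3p-1}\le\frac12$ is uniform in $\beta$, so Banach's theorem applies on the complete space $(\mathbb{R}_{+},d_{H})\cong(\mathbb{R},|\cdot|)$.

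Once the bound is repaired, your route has some merit relative to the paper's: it yields an explicit, quantitative contraction constant rather than relying on the cited concavity-implies-contraction result, and your observation that any positive fixed point of $g$ is a root of (\ref{DerivativeEqualsZero}), so that Theorem~2 already forces uniqueness, is a clean consistency check the paper does not make explicit. But in its present form the central estimate does not prove the theorem.
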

\begin{proof}
For $\lambda_{i},x>0$, consider the positive functions $f_{i}:=1/\left(1+\lambda_{i} x^{1/p}\right)$, where $i=1,\hdots,d$, $p \in[1,\infty)\setminus\{2\}$, and let
\[\phi(x) := x^{\frac{p}{3p-1}}, \quad \text{and} \quad \psi(x) := \displaystyle\frac{\sum_{i}f_{i}}{\sum_{i}\lambda_{i}f_{i}}.\]
Clearly, $\phi(x)$ and $\psi(x)$ are both concave and increasing in $\mathbb{R}_{+}$, and therefore \cite[p. 84]{BoydCvxBook} so is $g(\beta_{n}) = \phi(\psi(\beta_{n}))$ as a function of $\beta_{n}$, $n=0,1,2,\hdots$. Consequently (see e.g., the first step in the proof of Theorem 2.1.11 in \cite{KrauseBook2015}) the map $g$ is contractive in Hilbert metric on the cone $\mathbb{R}_{+}$. By Banach contraction mapping theorem, $g$ admits	unique fixed point $\beta^{*}_{{\rm{vol}}}\in\mathbb{R}_{+}$, and $\displaystyle\lim_{n\rightarrow\infty}g^{n}(\beta_{0})=\beta^{*}_{{\rm{vol}}}$.
\end{proof}
The rate-of-convergence for (\ref{FPE}) is fast in practice, see Fig. \ref{FPConv}. Next, we show how the $p$-sum computation may arise in the reachability analysis for linear systems.

\begin{figure}[t]
 \centering
\includegraphics[width=\linewidth]{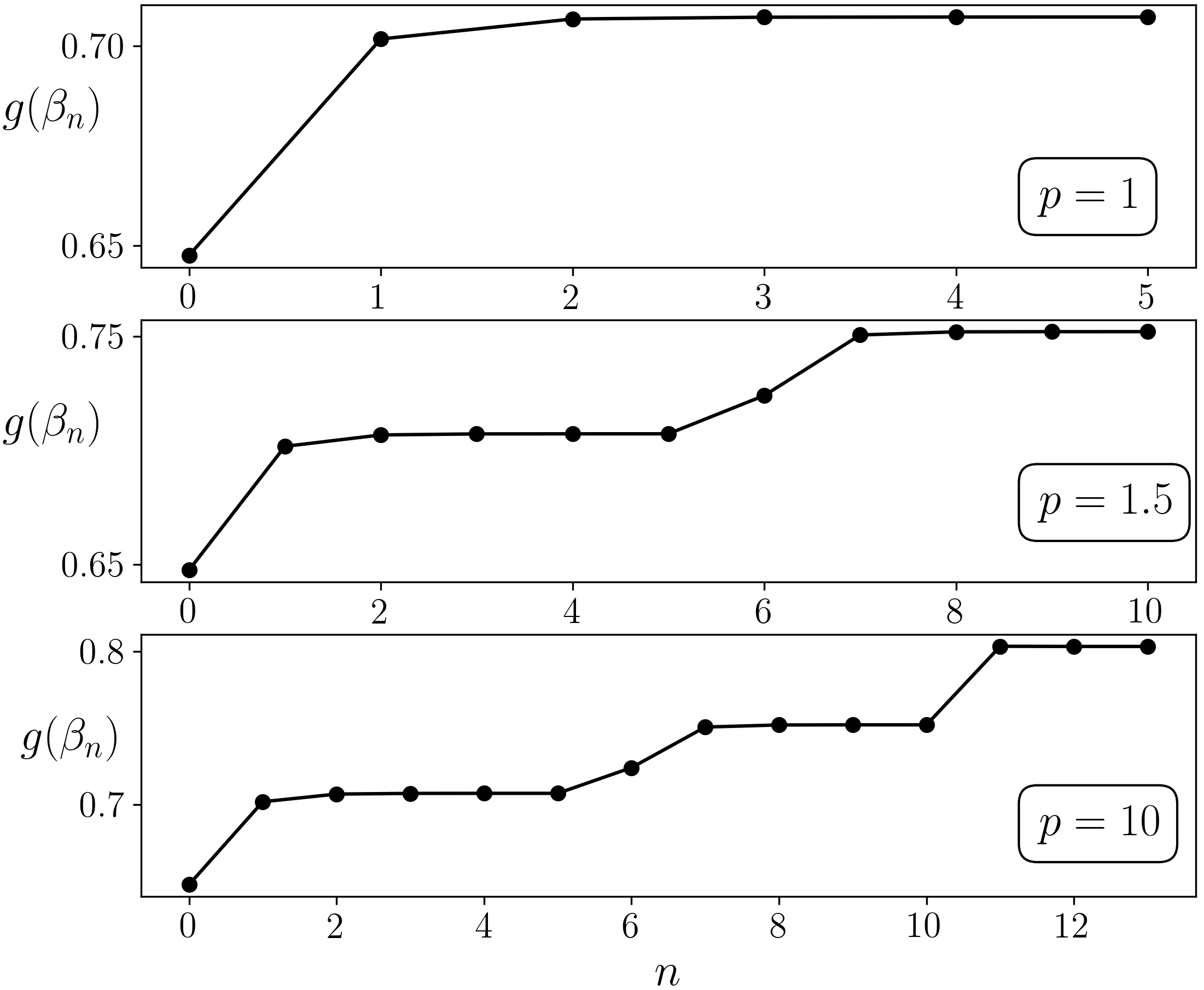}
 \caption{Starting from the same initial guess $\beta_{0} = 0.647584$, the iterates for (\ref{FPE}) are shown to converge (within an error tolerance of $10^{-5}$) in few steps for $p=1,1.5,10$. The above results are for the same parameters as in Fig. \ref{FixedPtFn}.}
 \label{FPConv}
\end{figure}

\subsection{Application to Reachability Analysis for Linear Systems}
Consider a discrete-time linear time-invariant (LTI) system 
\begin{eqnarray}
\bm{x}(t+1) = \bm{F}\bm{x}(t) + \bm{G}\bm{u}(t),	 \quad \bm{x}\in\mathbb{R}^{n_{x}}, \:\bm{u}\in\mathbb{R}^{n_{u}},
\label{LinearSys}
\end{eqnarray}
with set-valued uncertainties in initial condition $\bm{x}(0)\in\mathcal{X}_{0}\subset\mathbb{R}^{n_{x}}$, and control $\bm{u}(t)\in\mathcal{U}(t)\subset\mathbb{R}^{n_{u}}$. For $t=0,1,2,\hdots$, we would like to compute a tight outer ellipsoidal approximation of the reach set $\mathcal{X}(t)\ni \bm{x}(t)$, when the sets $\mathcal{X}_{0}$ and $\mathcal{U}(t)$ are modeled as $p_{1}$ and $p_{2}$-sum of ellipsoids, respectively, i.e.,
\begin{subequations}\label{pSumInitCondnControl}
\begin{align}
\label{pSumInitCondn}
	\mathcal{X}_{0}\! &= \mathcal{E}\left(\bm{x}_{0},\bm{Q}_{01}\right)\! +_{p_{1}} \hdots +_{p_{1}}\!\mathcal{E}\left(\bm{x}_{0},\bm{Q}_{0m}\right),\\
\label{pSumInitControl}	
	\hspace*{-0.07in}\mathcal{U}(t)\! &= \mathcal{E}\left(\bm{u}_{c}(t),\bm{U}_{1}(t)\right)\! +_{p_{2}} \hdots +_{p_{2}}\!\mathcal{E}\left(\bm{u}_{c}(t),\bm{U}_{n}(t)\right),
	\end{align}
\end{subequations} 
where $p_{1},p_{2}\geq 1$. Here, $\bm{x}_{0}$ and $\bm{u}_{c}(t)$ are the nominal initial condition and control, respectively. Further, $\bm{Q}_{0i}\in\mathbb{S}_{+}^{n_{x}}$ for $i=1,\hdots,m$, and for all $t=0,1,2,\hdots$, the matrices $\bm{U}_{j}(t)\in\mathbb{S}_{+}^{n_{u}}$ for $j=1,\hdots,n$. The case $m=n=1$ corresponds to the situation when both the initial condition and control have ellipsoidal uncertainties, and has appeared extensively in systems-control literature \cite{schweppe1968recursive, bertsekas1971recursive, schweppe1973, Chernousko1980PartI, MaksarovNorton1996, KurzhanskiValyi1997,durieu2001multi,chernousko2004properties, KurzhanskiyVaraiyaCDC2006, kurzhanskiy2007ellipsoidal,gagarinov2012computation}. By varying $p_{1},p_{2}\in[1,\infty]$ in (\ref{pSumInitCondnControl}), one obtains a large class of convex set-valued uncertainty descriptions.

In the absence of control ($\bm{u}(t)\equiv \bm{0}$), the set $\mathcal{X}(t)$ remains a $p_{1}$-sum of ellipsoids whenever  $\mathcal{X}_{0}$ is a $p_{1}$-sum of ellipsoids as in (\ref{pSumInitCondn}). This follows from the support function $h_{\mathcal{X}(t)}(\bm{y}) = h_{\bm{\Phi}(t,0)\mathcal{X}_{0}}(\bm{y}) = h_{\mathcal{X}_{0}}\left((\bm{\Phi}(t,0))^{\top}\bm{y}\right)$, $\bm{y}\in\mathbb{R}^{n_{x}}$, where $\bm{\Phi}(t,0) = \bm{F}^{t}$ is the state transition matrix of (\ref{LinearSys}). 

When the control $\bm{u}(t)$ is not identically zero, then the solution $\bm{x}(t) = \bm{F}^{t} \bm{x}(0) + \sum_{k=0}^{t-1}\bm{F}^{t-k-1}\bm{G}\bm{u}(k)$ corresponds to the support function
\begin{eqnarray}
h_{\mathcal{X}(t)}(\bm{y}) = h_{\mathcal{X}_{0}}\left((\bm{F}^{\top})^{t}\bm{y}\right) + \displaystyle\sum_{k=0}^{t-1} h_{\mathcal{U}(k)}\left(\bm{G}^{\top}(\bm{F}^{\top})^{t-k-1}\bm{y}\right).
\label{dLTVSptFn}	
\end{eqnarray}
From (\ref{DistributiveLinTrans}) and (\ref{dLTVSptFn}), it is evident that $\mathcal{X}(t)$ is the 1-sum (Minkowski sum) of $(t+1)$ convex sets, one of which is $p_{1}$-sum of ellipsoids, and each of the remaining $t$ sets are $p_{2}$-sum of ellipsoids, where $p_{1},p_{2}\geq 1$. Clearly, this remains true even when the system matrices $\bm{F}, \bm{G}$ are time-varying. We mention here that the general idea of using support functions for reachability analysis has appeared before in the literature \cite{witsenhausen1968sets, girard2008efficient}. 

%\begin{remark}
%In the case of \emph{continuous time} linear system $\dot{\bm{x}}=\bm{A}(t)\bm{x} + \bm{B}(t)\bm{u}, \bm{x}\in\mathbb{R}^{n_{x}}, \bm{u}\in\mathbb{R}^{n_{u}}$, $t\geq 0$,	 with $\bm{x}(0)\in\mathcal{X}_{0}$, $\bm{u}(t)\in\mathcal{U}(t)$, where the sets $\mathcal{X}_{0},\mathcal{U}(t)$ are as in (\ref{pSumInitCondnControl}), the support function for the reach set $\mathcal{X}(t)$ at time $t$ is
%\begin{align}
%&h_{\mathcal{X}(t)}(\bm{y}) = h_{\mathcal{X}_{0}}\!\left((\bm{\Phi}(t,0))^{\top}\bm{y}\right) \:+ \nonumber\\
%&\displaystyle\int_{0}^{t}h_{\mathcal{U}(\tau)}\left((\bm{B}(\tau))^{\top}(\bm{\Phi}(t,\tau))^{\top}\bm{y}\right)\:\differential\tau, \quad\bm{y}\in\mathbb{R}^{n_{x}}.
%\label{cLTVSptFn}	
%\end{align}
%\end{remark}

In the following Section, we apply the formula (\ref{betamintr}) and the fixed point algorithm (\ref{FPE}) to compute the minimum trace and minimum volume outer ellipsoidal approximations of the reach sets of (\ref{LinearSys}), constrained in the parametric family (\ref{Qbetap}). With slight abuse of nomenclature, we will hereafter refer these ellipsoids as MTOE and MVOE, respectively. These should be understood as the optimal \emph{within the parametric family (\ref{Qbetap})}. The prototype algorithms for computing the same for the convex set $\mathcal{E}\left(\bm{0},\bm{Q}_{1}\right) +_{p} \mathcal{E}\left(\bm{0},\bm{Q}_{2}\right)$, $\bm{Q}_{1},\bm{Q}_{2}\in\mathbb{S}^{d}_{+}$, are given below. These serve as the building blocks for computing the parametric MTOE and MVOE for the reach set of (\ref{LinearSys}) subject to (\ref{pSumInitCondnControl}), which we illustrate next. For the results reported in this paper which use Algorithm \ref{AlgoMVOE}, we have set tol $= 10^{-5}$, and MaxIter $= 100$.

\begin{algorithm}
    \caption{Algorithm to compute the parametric MTOE for the $p$-sum $\mathcal{E}\left(\bm{0},\bm{Q}_{1}\right) +_{p} \mathcal{E}\left(\bm{0},\bm{Q}_{2}\right)$, $p\in[1,\infty)\setminus\{2\}$.}
    \label{AlgoMTOE}
    \begin{algorithmic}[1] % The number tells where the line numbering should start
        \Procedure{MTOE}{$\bm{Q}_{1},\bm{Q}_{2},p$} \Comment{$\bm{Q}_{1},\bm{Q}_{2}\in\mathbb{S}^{d}_{+}$}
            \State $\beta^{*}_{\text{tr}}\gets \left(\tr\left(\bm{Q}_{1}\right)/\tr\left(\bm{Q}_{2}\right)\right)^{\frac{p}{1+p}}$
            \State $\bm{Q}\gets \left(1 + 1/\beta^{*}_{\text{tr}}\right)^{\frac{1}{p}}\bm{Q}_{1} + \left(1 + \beta^{*}_{\text{tr}}\right)^{\frac{1}{p}}\bm{Q}_{2}$ 
            \State \textbf{return} $\mathcal{E}\left(\bm{0},\bm{Q}\right)$ \Comment{The parametric MTOE}
        \EndProcedure
    \end{algorithmic}
\end{algorithm}

\begin{algorithm}
    \caption{Algorithm to compute the parametric MVOE for the $p$-sum $\mathcal{E}\left(\bm{0},\bm{Q}_{1}\right) +_{p} \mathcal{E}\left(\bm{0},\bm{Q}_{2}\right)$, $p\in[1,\infty)\setminus\{2\}$.}
    \label{AlgoMVOE}
    \begin{algorithmic}[1] % The number tells where the line numbering should start
        \Procedure{MVOE}{$\bm{Q}_{1},\bm{Q}_{2},p,$ tol, MaxIter} \Comment{$\bm{Q}_{1},\bm{Q}_{2}\in\mathbb{S}^{d}_{+}$, tol is numerical tolerance, MaxIter is maximum number of iterations}
        	\State $\{\lambda_{i}\}_{i=1}^{d}\gets$ spectrum $\left(\bm{Q}_{1}^{-1}\bm{Q}_{2}\right)$
        	\State $\beta\gets$ random positive number \Comment{Initialize parameter}
        	\State $\varepsilon \gets 1$ \Comment{Initialize error}
        	\State $j \gets 1$ \Comment{Initialize iteration index}
        	\While{(($\varepsilon >$ tol) $\&$ ($j <$ MaxIter))}
        	    \State $\beta_{\text{new}} \gets g(\beta)$ \Comment{$g(\cdot)$ as in (\ref{FPE}), needs $\{\lambda_{i}\}_{i=1}^{d},p$}
                \State $\varepsilon \gets |\beta_{\text{new}} - \beta|$
                \State $\beta \gets \beta_{\text{new}}$
            \EndWhile
            \State $\beta^{*}_{\text{vol}}\gets \beta$
            \State $\bm{Q}\gets \left(1 + 1/\beta^{*}_{\text{vol}}\right)^{\frac{1}{p}}\bm{Q}_{1} + \left(1 + \beta^{*}_{\text{vol}}\right)^{\frac{1}{p}}\bm{Q}_{2}$ 
            \State \textbf{return} $\mathcal{E}\left(\bm{0},\bm{Q}\right)$ \Comment{The parametric MVOE}
        \EndProcedure
    \end{algorithmic}
\end{algorithm}

%%%%%%%%%%%%%%%%%%%%%%%%%%%%%%%%%%%%%%%%%%%%%%%%%%%%%%%%%%%%%%%%%%%%%%%%%%%%%%%%

\begin{figure*}[t]
 \centering
\includegraphics[width=0.92\linewidth]{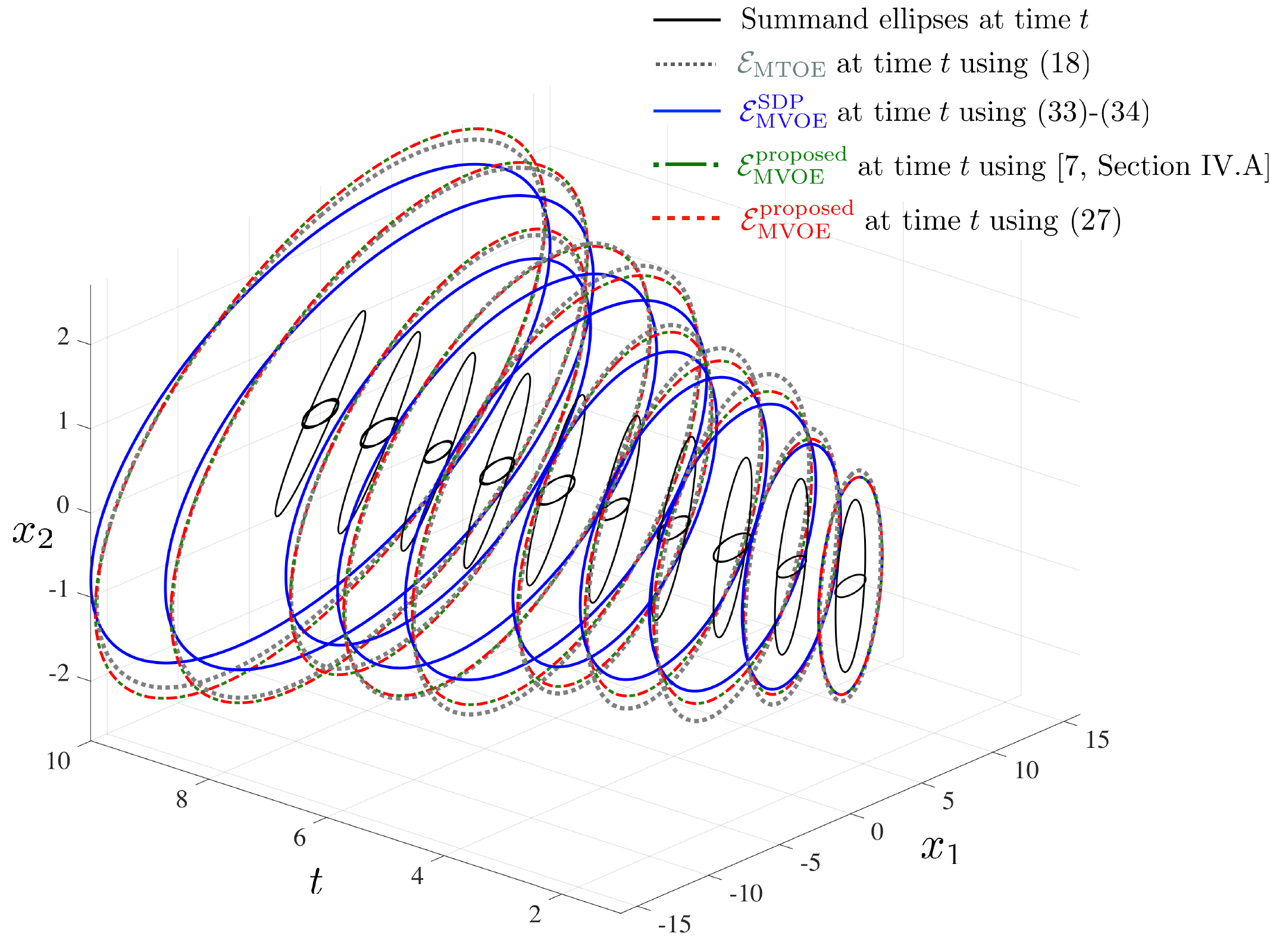}
 \caption{For dynamics (\ref{ExampleLTIdiscretetime}), the MTOEs and MVOEs of the reach sets $\mathcal{X}(t)$ given by (\ref{EllipsoidalMinkSum}) with parameters (\ref{ParamsMinkSum}), are shown for $t=1,\hdots,10$, along with the $(t+1)$ summand ellipses in the Minkowski sum (\ref{EllipsoidalMinkSum}) for each $t$. The MTOEs admit analytical solution (\ref{betamintr}). We compute the MVOEs via three different methods: SDP computation (\ref{BoydSDP})-(\ref{Constr}), a root bracketing technique to solve (\ref{AfterDivisionBylambdaprod}) proposed in \cite[Section IV.A]{HalderMinkSum2018}, and recursion (\ref{FPE}).}
 \label{ReachSetPlot}
\end{figure*}

\section{Numerical Simulations}
We consider the planar discrete-time linear system 
{\small{\begin{eqnarray}\label{ExampleLTIdiscretetime}
\begin{pmatrix}
 	x_{1}(t+1)\\
 	x_{2}(t+1)
 \end{pmatrix}
 = \begin{pmatrix}
 1 & h\\
 0 & 1	
 \end{pmatrix}
\begin{pmatrix}x_{1}(t)\\
 x_{2}(t)	
 \end{pmatrix} \: + \: \begin{pmatrix}
 h & h^{2}/2\\
 0 & h	
 \end{pmatrix}
\begin{pmatrix}u_{1}(t)\\
 u_{2}(t)	
 \end{pmatrix},	
\end{eqnarray}}}
which can be seen as the sampled version of the continuous-time system $\dot{x}_{1}=x_{2}+u_{1},\,\dot{x}_{2}=u_{2}$, with sampling period $h>0$. To illustrate the proposed algorithms, we will compute the reach sets of (\ref{ExampleLTIdiscretetime}) for various convex uncertainty models of the form (\ref{pSumInitCondnControl}) for $\mathcal{X}_{0}$ and $\mathcal{U}(t)$. For this numerical example, we set the nominal initial condition $\bm{x}_{0} \equiv \bm{0}$, and nominal control $\bm{u}_{c}(t)\equiv\bm{0}$. For other examples, see Appendix \ref{AppendixDoubleIntegrator} and \ref{AppendixISS}.

\subsection{The case $m=n=1$}\label{ReachabilitySubsection}
For $m=n=1$ in (\ref{pSumInitCondnControl}), both $\mathcal{X}_{0}$ and $\mathcal{U}(t)$ are ellipsoidal, and hence the reach set $\mathcal{X}(t)$ for (\ref{ExampleLTIdiscretetime}), is the Minkowski sum of $(t+1)$ ellipsoids. Consequently, we are led to compute the MTOE and MVOE of the Minkowski sum\footnote[3]{The symbol ``$\Sigma$" in (\ref{EllipsoidalMinkSum}) stands for the 1-sum.}
\begin{align}
	\mathcal{X}(t) = \bm{F}^{t}\mathcal{E}\left(\bm{0},\bm{Q}_{0}\right) \:+_{1}\: \displaystyle\sum_{k=0}^{t-1} \bm{F}^{t-k-1}\bm{G}\mathcal{E}\left(\bm{0},\bm{U}(t)\right).
	\label{EllipsoidalMinkSum}
\end{align}
Notice that the MTOE admits analytical solution (\ref{betamintr}), applied pairwise to the $(t+1)$ summand ellipsoids in (\ref{EllipsoidalMinkSum}), with $p=1$. For this case, the current state-of-the-art for MVOE computation is to reformulate the same as a semidefinite programming (SDP) problem via the $\mathcal{S}$-procedure (see e.g., \cite[Ch. 3.7.4]{BoydLMIBook}). Specifically, given $(t+1)$ ellipsoids $\mathcal{E}(\bm{q}_{i},\bm{Q}_{i})$ or equivalently $\mathcal{E}(\bm{A}_{i},\bm{b}_{i},c_{i})$ in $\mathbb{R}^{n_{x}}$, $i=1,\hdots,t+1$, to compute the MVOE containing their Minkowski sum $\mathcal{E}(\bm{q}_{1},\bm{Q}_{1}) +_{1} \hdots +_{1} \mathcal{E}(\bm{q}_{t+1},\bm{Q}_{t+1})$, one solves the SDP problem:
\begin{eqnarray}
\underset{\bm{A}_{0},\bm{b}_{0},\tau_{1},\hdots,\tau_{t+1}}{\text{minimize}}\;\log\det\,\bm{A}_{0}^{-1}	
\label{BoydSDP}
\end{eqnarray}
subject to
	\begin{subequations}
	\begin{alignat}{2}
		&\bm{A}_{0} \succ \bm{0}, \label{posidef}\\
		&\tau_{i} \geq 0, \qquad i=1,\hdots,t+1, \label{nonneg}\\
		&\begin{bmatrix}
		\bm{E}_{0}^{\top}\bm{A}_{0}\bm{E}_{0} & \bm{E}_{0}^{\top}\bm{b}_{0} & \bm{0}\\
		\bm{b}_{0}^{\top}\bm{E}_{0} & -1 & \bm{b}_{0}^{\top}\\
		\bm{0} & \bm{b}_{0} & -\bm{A}_{0}	
		\end{bmatrix} \!-\! \displaystyle\sum_{i=1}^{t+1}\tau_{i}\begin{bmatrix}
	\widetilde{\bm{A}}_{i} & \widetilde{\bm{b}}_{i} & \bm{0}\\
	\widetilde{\bm{b}}_{i}^{\top} & c_{i} & \bm{0}\\
	\bm{0} & \bm{0} & \bm{0}
\end{bmatrix}
 \preceq \bm{0}, \label{LMI}
	\end{alignat}	
	\label{Constr}
\end{subequations}
where $\bm{E}_{i}$ is the binary matrix of size $n_{x}\times (t+1) n_{x}$ that selects the $i$-th vector, $i=1,\hdots,t+1$, from the vertical stacking of $(t+1)$ vectors, each of size $n_{x}\times 1$; and
\[\bm{E}_{0} := \displaystyle\sum_{i=1}^{t+1}\bm{E}_{i}, \, \widetilde{\bm{A}}_{i} := \bm{E}_{i}^{\top}\bm{A}_{i}\bm{E}_{i}, \, \widetilde{\bm{b}}_{i}:=\bm{E}_{i}^{\top}\bm{b}_{i}, \, i=1,\hdots,t+1.\]
The argmin pair $(\bm{A}_{0}^{*},\bm{b}_{0}^{*})$ associated with the SDP (\ref{BoydSDP})-(\ref{Constr}), results the optimal ellipsoid
\[\mathcal{E}_{\text{MVOE}}^{\text{SDP}} := \mathcal{E}\left(\bm{q}_{\text{SDP}},\bm{Q}_{\text{SDP}}\right),\]
where, using (\ref{Ab2Qq}), $\bm{Q}_{\text{SDP}}:=(\bm{A}_{0}^{*})^{-1}$, and $\bm{q}_{\text{SDP}}:=-\bm{Q}_{\text{SDP}}\bm{b}_{0}^{*}$. 

Our intent is to compare $\mathcal{E}_{\text{MVOE}}^{\text{SDP}}$ with $\mathcal{E}_{\text{MVOE}}^{\text{proposed}}$, where
\[\mathcal{E}_{\text{MVOE}}^{\text{proposed}} := \mathcal{E}\left(\bm{q}_{1} + \hdots + \bm{q}_{t+1}, \bm{Q}(\beta_{\text{vol}}^{*})\right),\]
and the parametric form of $\bm{Q}(\beta_{\text{vol}}^{*})$ is given by (\ref{Qbetap}) with $p=1$, applied pairwise to the given set of shape matrices $\{\bm{Q}_{1}, \hdots, \bm{Q}_{t+1}\}$. The numerical value of $\beta_{\text{vol}}^{*}$ is computed from the fixed point recursion (\ref{FPE}) with $p=1$, solved pairwise from the set $\{\bm{Q}_{1}, \hdots, \bm{Q}_{t+1}\}$. In other words, due to the associate property of $p$-sum, the ellipsoid $\mathcal{E}^{\text{proposed}}_{\text{MVOE}}$ is obtained by applying Algorithm \ref{AlgoMVOE} pairwise to the set $\{\bm{Q}_{1}, \hdots, \bm{Q}_{t+1}\}$, and then translating the resulting ellipsoid by vector $\bm{q}_{1} + \hdots + \bm{q}_{t+1}$.

We will see that the MVOE algorithms proposed herein help in reducing computational time, compared to the SDP approach, without sacrificing accuracy. For comparing numerical performance, we implemented both the SDP (via {\texttt{cvx}} \cite{cvx}) and our proposed algorithms in MATLAB 2016b, on 2.6 GHz Intel Core i5 processor with 8 GB memory.

For the dynamics (\ref{ExampleLTIdiscretetime}), we set $h = 0.3$, and
\begin{eqnarray}
\bm{Q}_{0} = \bm{I}_{2}, \quad	\bm{U}(t) = \left(1 + \cos^{2}(t)\right){\rm{diag}}([10,0.1]),
\label{ParamsMinkSum}
\end{eqnarray}
and for each $t=1,2,\hdots$, compute the MTOE and MVOE of the reach set (\ref{EllipsoidalMinkSum}). For MVOE computation, we use three different methods: using the SDP (\ref{BoydSDP})-(\ref{Constr}), using a root-bracketing algorithm proposed in \cite[Section IV.A]{HalderMinkSum2018} to solve (\ref{AfterDivisionBylambdaprod}), and by using the fixed point recursion (\ref{FPE}) proposed herein. In Fig. \ref{ReachSetPlot}, the corresponding MTOEs and MVOEs, as well as the summand ellipses in the Minkowski sum (\ref{EllipsoidalMinkSum}) are shown for $t=1,\hdots,10$. In this paper, we do not emphasize the root bracketing method for MVOE computation given in \cite[Section IV.A]{HalderMinkSum2018} since that is a custom method for $n_{x} = 2$, while the SDP (\ref{BoydSDP})-(\ref{Constr}) and the fixed point recursion (\ref{FPE}) are valid in any dimensions.

\begin{center}
\begin{table*}[t]
\centering
	\begin{tabular}{|l||*{10}{c|}}\hline
\backslashbox{${\rm{vol}}$(MVOE)}{Physical time step}
&$t=1$&$t=2$&$t=3$
&$t=4$&$t=5$ &$t=6$&$t=7$&$t=8$&$t=9$&$t=10$\\\hline\hline
&&&&&&&&&&\\
${\rm{vol}}\left(\mathcal{E}_{\text{MVOE}}^{\text{SDP}}\right)$ using (\ref{BoydSDP})-(\ref{Constr}) & \makebox[4em]{8.6837} & \makebox[4em]{14.5461} & \makebox[4em]{27.9035} & \makebox[4em]{31.9097}  & \makebox[4em]{35.0421} & \makebox[4em]{61.0650} & \makebox[4em]{65.3182} & \makebox[4em]{59.1310} & \makebox[4em]{100.8786}  & \makebox[4em]{111.2311}\\
&&&&&&&&&&\\ \hline
&&&&&&&&&&\\ 
${\rm{vol}}\left(\mathcal{E}_{\text{MVOE}}^{\text{proposed}}\right)$ using \cite[Section IV.A]{HalderMinkSum2018} & \makebox[4em]{8.6837} & \makebox[4em]{14.6765} &  \makebox[4em]{28.7263} &  \makebox[4em]{33.2574} &  \makebox[4em]{36.8740} & \makebox[4em]{65.1379} & \makebox[4em]{70.1631} & \makebox[4em]{63.8502} & \makebox[4em]{109.2246} & \makebox[4em]{120.8542}\\
&&&&&&&&&&\\ \hline
&&&&&&&&&&\\
${\rm{vol}}\left(\mathcal{E}_{\text{MVOE}}^{\text{proposed}}\right)$ using (\ref{FPE}) & \makebox[4em]{8.6837} & \makebox[4em]{14.6765} & \makebox[4em]{28.7263} &  \makebox[4em]{33.2574} & \makebox[4em]{36.8740} & \makebox[4em]{65.1379} & \makebox[4em]{70.1632} & \makebox[4em]{63.8502} & \makebox[4em]{109.2246} & \makebox[4em]{120.8542}\\
&&&&&&&&&&\\\hline
\end{tabular}
\caption{Comparison of the volumes of the MVOEs for the reach set $\mathcal{X}(t)$ in (\ref{EllipsoidalMinkSum}) at $t=1,2,\hdots, 10$, corresponding to the dynamics (\ref{ExampleLTIdiscretetime}) with parameters given by (\ref{ParamsMinkSum}), computed via three methods: by solving the SDP (\ref{BoydSDP})-(\ref{Constr}) (\emph{first row}), by using a pairwise root bracketing technique reported in \cite[Section IV.A]{HalderMinkSum2018} (\emph{second row}), and by iterating the fixed point recursion (\ref{FPE}) with tolerance $10^{-5}$ (\emph{third row}). The corresponding MVOEs are depicted in Fig. \ref{ReachSetPlot}. Overall, the MVOE volumes in the second and third row are in close agreement, while they are slightly conservative than the SDP results in the first row.}
\label{TableVolComparison}
\end{table*}
\end{center}

To assess the quality of the outer ellipsoidal approximations shown in Fig. \ref{ReachSetPlot}, we compare the volumes (in our two-dimensional example, areas) of the MVOEs computed via the three different methods in Table \ref{TableVolComparison}. The columns in Table \ref{TableVolComparison} correspond to different time steps while the rows correspond to the three different methods mentioned above. From Table \ref{TableVolComparison}, notice that the MVOE volumes are not monotone in time for any given method (see e.g., the columns for $t=7$ and $t=8$) since the shape matrices $\bm{U}(t)$ in (\ref{ParamsMinkSum}) are periodic. We notice that the volumes listed in the second and third row in Table \ref{TableVolComparison} are in close agreement, while they are slightly conservative compared to the  same in the first row, which are computed by solving the SDP (\ref{BoydSDP})-(\ref{Constr}). Furthermore, the relative numerical error seems to grow (albeit slowly) with $t$ (with increasing number of summand ellipsoids).

By looking at the computational accuracy comparisons from Table \ref{TableVolComparison}, it may seem that the SDP approach is superior to the algorithms proposed herein. However, the corresponding computational runtimes plotted in Fig. \ref{FPCompTime} reveal that solving the fixed point recursion (\ref{FPE}) entails orders of magnitude speed-up compared to solving the SDP. Given that the growing interests in reach set computation among practitioners are stemming from real-time safety critical applications (e.g., decision making for collision avoidance in a traffic of autonomous and semi-autonomous cars or drones), the issue of computational runtime becomes significant. Such applications indeed require computing the reach set over a short physical time horizon (typically a moving horizon of few seconds length); the MVOE computational time-scale, then, needs to be much smaller than the dynamics time-scale. The results in Fig. \ref{FPCompTime} show that the proposed algorithms can be useful in such context as they offer significant computational speed-up without much conservatism.

\begin{figure}[t]
 \centering
\includegraphics[width=\linewidth]{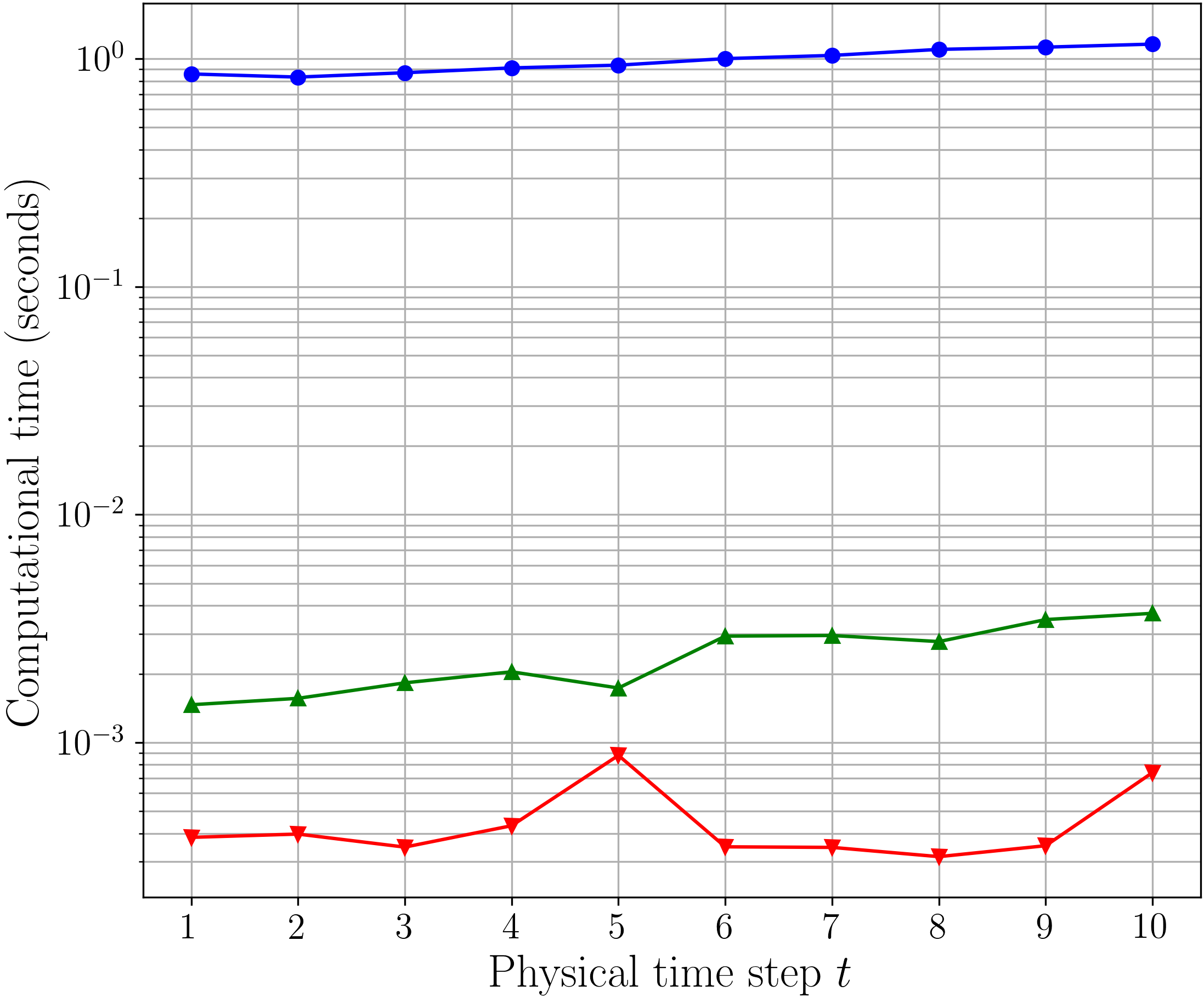}
 \caption{Comparison of the computational times for MVOE calculations in Fig. \ref{ReachSetPlot} and Table \ref{TableVolComparison}. The MVOE computational times for the proposed fixed point recursion method (\protect\invtriangleline, \emph{third row in Table \ref{TableVolComparison}}) are orders of magnitude faster than the same for the SDP computation (\protect\circleline, \emph{first row in Table \ref{TableVolComparison}}). A root bracketing technique to solve (\ref{AfterDivisionBylambdaprod}) proposed in \cite[Section IV.A]{HalderMinkSum2018} requires more computational time (\protect\triangleline, \emph{second row in Table \ref{TableVolComparison}}) than iterating the recursion (\ref{FPE}), but is faster than solving the SDP (\ref{BoydSDP})-(\ref{Constr}).}
 \label{FPCompTime}
\end{figure}

\subsection{The Case $m,n>1$}\label{subsecmnmorethan1}
\begin{figure*}[t]
 \centering
\includegraphics[width=0.92\linewidth]{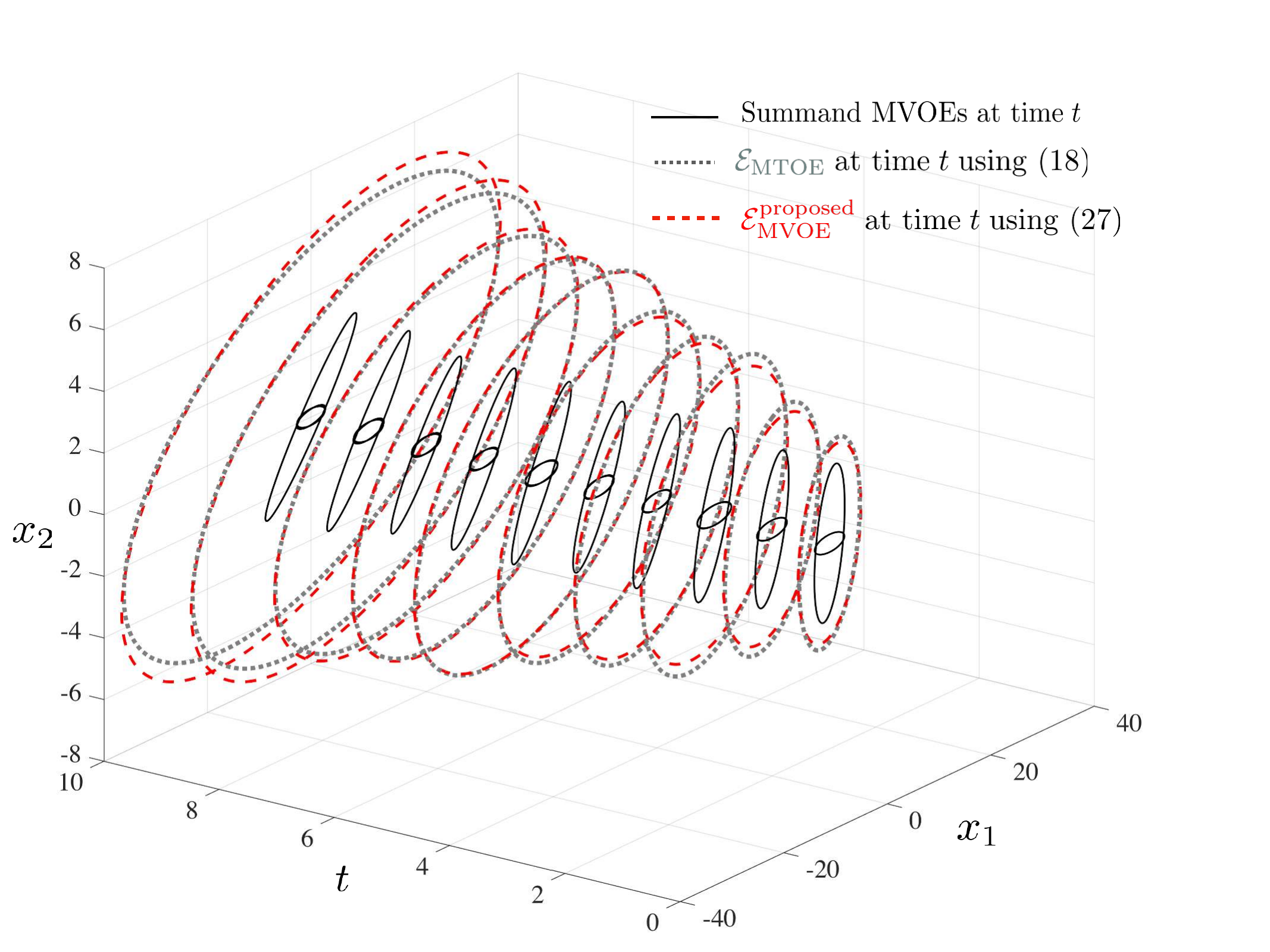}
 \caption{For dynamics (\ref{ExampleLTIdiscretetime}), the MTOEs and MVOEs of the reach sets $\mathcal{X}(t)$ given by (\ref{pEllipsoidalMinkSum}) with parameters (\ref{InitShapeMatrix}) and (\ref{ControlShapeMatrix}), are shown for $t=1,\hdots,10$, along with the $(t+1)$ summand MVOEs in the 1-sum (\ref{pEllipsoidalMinkSum}) for each $t$. As before, the MTOEs admit analytical solution (\ref{betamintr}), applied pairwise. We compute the MVOEs via the pairwise recursion (\ref{FPE}).}
 \label{pSumReachSetPlot}
\end{figure*}
We now consider the case $m,n>1$ in (\ref{pSumInitCondnControl}) with $m\neq n$, and compute the MVOEs and MTOEs for the reach set $\mathcal{X}(t)$ of (\ref{ExampleLTIdiscretetime}) with $h=0.3$, as before. Specifically, in (\ref{pSumInitCondnControl}), we fix $m=2, n=3$, $p_{1} = 2.5$, and $p_{2} = 1.5$. In words, the set of uncertain initial conditions $\mathcal{X}_{0}$ is modeled as $2.5$-sum of two ellipsoids, i.e., $\mathcal{X}_{0} = \mathcal{E}\left(\bm{0},\bm{Q}_{01}\right) +_{2.5} \mathcal{E}\left(\bm{0},\bm{Q}_{02}\right)$; the control uncertainty set $\mathcal{U}(t)$ is modeled as $1.5$-sum of three (time-varying) ellipsoids, i.e., $\mathcal{U}(t) = \mathcal{E}\left(\bm{0},\bm{U}_{1}(t)\right) +_{1.5} \mathcal{E}\left(\bm{0},\bm{U}_{1}(t)\right) +_{1.5} \mathcal{E}\left(\bm{0},\bm{U}_{3}(t)\right)$. The shape matrices for the set of uncertain initial conditions are randomly generated positive definite matrices:
\begin{eqnarray}
\bm{Q}_{01} = \begin{pmatrix}
 	    2.2259   & 0.1992\\
    0.1992    & 2.4357 \end{pmatrix}, \bm{Q}_{02} = \begin{pmatrix}
 	        2.3111  &  0.6768\\
    0.6768  &  2.1848 \end{pmatrix}.
\label{InitShapeMatrix}	
\end{eqnarray}
The shape matrices for the set of uncertain controls are chosen as
\begin{eqnarray}
\bm{U}_{j}(t) = \left(1 + \cos^{2}\left(jt\right)\right){\rm{diag}}([10,0.1]), \quad j=1,2,3.
\label{ControlShapeMatrix}	
\end{eqnarray}
Then, the reach set $\mathcal{X}(t)$ for (\ref{ExampleLTIdiscretetime}) at any time $t>0$, equals
{\small{\begin{align}
	&\bm{F}^{t}\bigg\{\!\mathcal{E}\left(\bm{0},\bm{Q}_{10}\right) \:+_{2.5}\: \mathcal{E}\left(\bm{0},\bm{Q}_{20}\right)\!\bigg\} +_{1} \displaystyle\sum_{k=0}^{t-1} \bm{F}^{t-k-1}\bm{G}\bigg\{\!\mathcal{E}\left(\bm{0},\bm{U}_{1}(t)\right) \nonumber\\
	&\:+_{1.5}\:\mathcal{E}\left(\bm{0},\bm{U}_{2}(t)\right)  \:+_{1.5}\: \mathcal{E}\left(\bm{0},\bm{U}_{3}(t)\right)\!\bigg\},
	\label{pEllipsoidalMinkSum}
\end{align}}}
which, due to (\ref{DistributiveLinTrans}), is the 1-sum\footnote[4]{The summation symbol ``$\Sigma$" in (\ref{pEllipsoidalMinkSum}) stands for the 1-sum.} of $(t+1)$ convex sets, one of them being the 2.5-sum of two ellipsoids, and the remaining $t$ of them each being the 1.5-sum of three ellipsoids.

\begin{center}
\begin{table*}[t]
\centering
	\begin{tabular}{|l||*{10}{c|}}\hline
\backslashbox{${\rm{vol}}$(MVOE)}{Physical time step}
&$t=1$&$t=2$&$t=3$
&$t=4$&$t=5$ &$t=6$&$t=7$&$t=8$&$t=9$&$t=10$\\\hline\hline
&&&&&&&&&&\\
${\rm{vol}}\left(\mathcal{E}_{\text{MVOE}}^{\text{proposed}}\right)$ using (\ref{FPE}) & \makebox[4em]{57.7493} & \makebox[4em]{99.3984} & \makebox[4em]{182.9045} &  \makebox[4em]{206.0490} & \makebox[4em]{266.6789} & \makebox[4em]{383.9408} & \makebox[4em]{387.4037} & \makebox[4em]{461.7879} & \makebox[4em]{610.9069} & \makebox[4em]{666.9160}\\
&&&&&&&&&&\\\hline
\end{tabular}
\caption{Volumes of the MVOEs for the reach set $\mathcal{X}(t)$ in (\ref{pEllipsoidalMinkSum}) at $t=1,2,\hdots, 10$, corresponding to the dynamics (\ref{ExampleLTIdiscretetime}) with parameters given by (\ref{InitShapeMatrix}) and (\ref{ControlShapeMatrix}), computed via fixed point recursion (\ref{FPE}) with tolerance $10^{-5}$. The corresponding MVOEs are shown in Fig. \ref{pSumReachSetPlot}.}
\label{TablepSumVolComparison}
\end{table*}
\end{center}

Unlike the case in Section \ref{ReachabilitySubsection} where the SDP approach is known in the literature for computing an MVOE of the 1-sum, and served as a baseline algorithm to compare the performance of our proposed algorithms, to the best of our knowledge, no such algorithm is known for the general $p$-sum case. Our proposed algorithms are generic enough to enable the MVOE/MTOE computation in this case. Specifically, at each time $t$, we first compute the MVOEs (resp. MTOEs) for each of the $(t+1)$ summand convex sets in (\ref{pEllipsoidalMinkSum}) by solving (\ref{FPE}) (resp. (\ref{betamintr})) pairwise, and then compute the MVOE (resp. MTOE) of the 1-sum of the resulting MVOEs (resp. resulting MTOEs) using the same. In Fig. \ref{pSumReachSetPlot}, we show the MVOEs and MTOEs thus computed, for the reach set (\ref{pEllipsoidalMinkSum}) at $t=1,\hdots,10$.

\begin{figure}[t]
 \centering
\includegraphics[width=\linewidth]{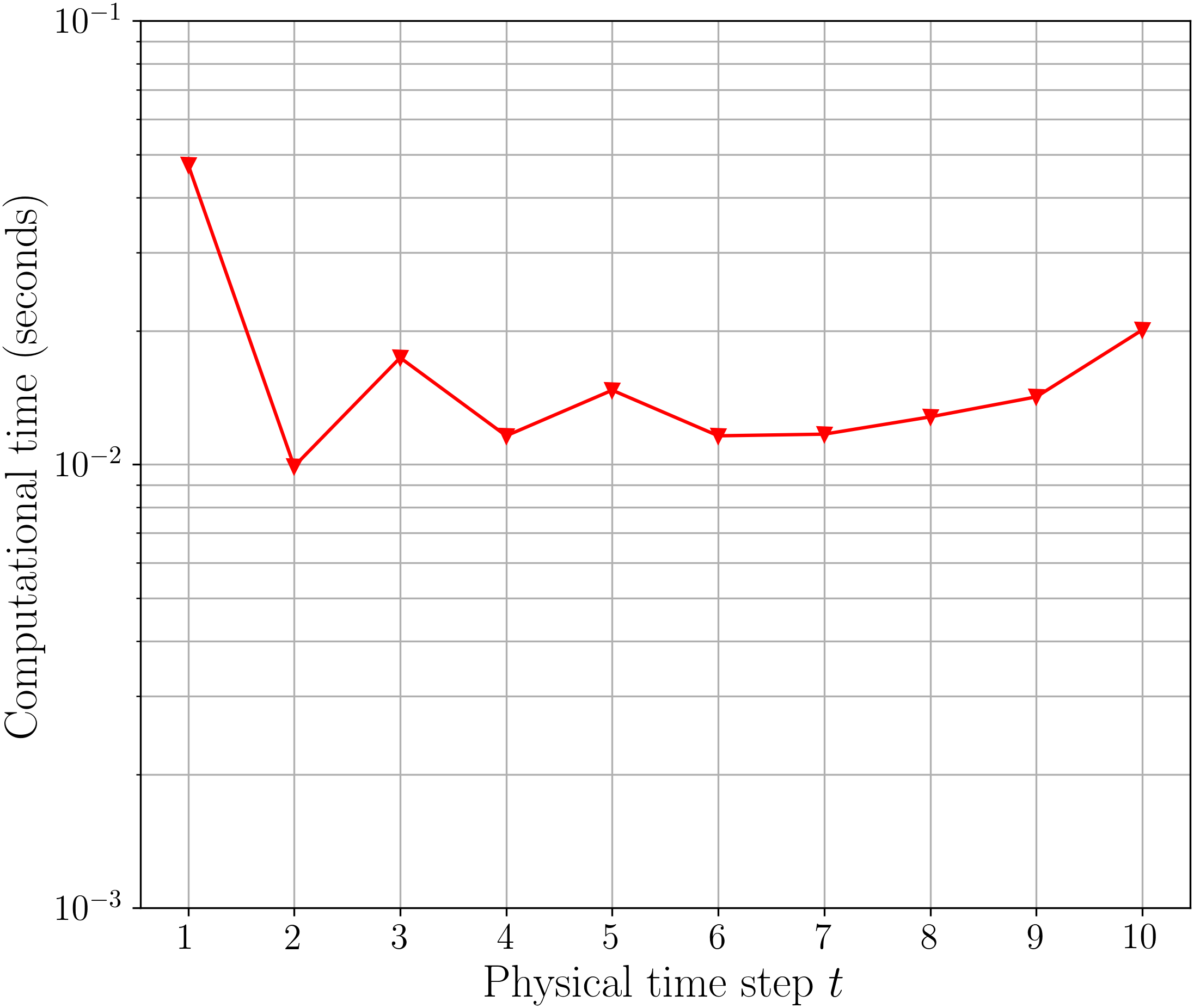}
 \caption{Computational times for the MVOE calculations in Fig. \ref{pSumReachSetPlot} and Table \ref{TablepSumVolComparison} using recursion (\ref{FPE}).}
 \label{FPpSumCompTime}
\end{figure}

The volumes (in our two dimensional numerical example, areas) of the MVOEs shown in Fig. \ref{pSumReachSetPlot}, are listed in Table \ref{TablepSumVolComparison}. The corresponding computational times are shown in Fig. \ref{FPpSumCompTime}. Notice that the MVOE computational times reported in Fig. \ref{FPpSumCompTime} are about two orders of magnitude slower than the same reported in Fig. \ref{FPCompTime}. This is expected since the results in Fig. \ref{FPpSumCompTime} correspond to computing MVOEs of the 1-sums while the same in Fig. \ref{FPpSumCompTime} correspond to computing MVOEs of the mixed $p$-sums (in our example, 1-sums, 1.5 sums and 2.5 sums) at any given time $t>0$, and is indeed consistent with the observation made in Fig. \ref{FPConv} that the rate-of-convergence of recursion (\ref{FPE}) decreases with increasing $p\geq 1$. Nevertheless, the computational times shown in Fig. \ref{FPpSumCompTime} are still smaller than the computational times for the SDP approach in the 1-sum case shown in Fig. \ref{FPCompTime}.

\subsection{Quality of Approximation}\label{subsecQualityOfApprox}
It is natural to investigate the quality of approximations for the MTOEs and MVOEs reported herein with respect to the actual reach sets, in terms of the ``shapes" of the true and approximating sets. For  example, it would be undesirable if the MTOE/MVOE computation promotes ``skinny ellipsoids" which are too elongated along some directions and too compressed along others, when the same may not hold for the actual reach sets. This motivates us to compute the (two-sided) Hausdorff distance $\delta_{{\rm{H}}}(t)$ (which is a metric) at each time step $t$, between the true reach set $\mathcal{X}(t)$ and its approximating outer ellipsoid $\widehat{\mathcal{E}}(t)$, given by 
\begin{eqnarray}
\delta_{{\rm{H}}}(t) := \max\bigg\{\sup_{\bm{x}(t)\in\mathcal{X}(t)}\inf_{\widehat{\bm{x}}(t)\in\widehat{\mathcal{E}}(t)} \parallel \bm{x}(t) - \widehat{\bm{x}}(t) \parallel_{2}\:, \nonumber\\
\sup_{\widehat{\bm{x}}(t)\in\widehat{\mathcal{E}}(t)}\inf_{\bm{x}(t)\in\mathcal{X}(t)} \parallel \bm{x}(t) - \widehat{\bm{x}}(t) \parallel_{2}\bigg\}.
\label{HausdorffDefn}	
\end{eqnarray}

\begin{remark}
We clarify here that the MVOE for any compact convex set (in our case, the reach set) is guaranteed to be unique, and is referred to as the L\"{o}wner-John ellipsoid \cite{john2014extremum,busemann1950foundations}.	 Exact computation of the L\"{o}wner-John ellipsoid, however, leads to semi-infinite programming \cite[Ch. 8.4.1]{BoydCvxBook}, and for most convex sets such as the Minkowski sum of ellipsoids, has no known exact SDP representation. Thus, the ellipsoid $\mathcal{E}_{\text{MVOE}}^{\text{SDP}}$ described in Section VI.A, and computed by solving (\ref{BoydSDP})-(\ref{Constr}), is an SDP relaxation of the true MVOE; see e.g., \cite[Ch. 3.7.4.1]{BentalNemirovskiBook2001}. In the following, we will compare the Hausdorff distance between $\mathcal{X}(t)$ and its approximating outer ellipsoid $\widehat{\mathcal{E}}(t)$, where $\widehat{\mathcal{E}}(t)$ is either the MTOE $\mathcal{E}_{\text{MTOE}}(t)$, or one of the MVOEs: $\mathcal{E}_{\text{MVOE}}^{\text{SDP}}(t)$ and $\mathcal{E}_{\text{MVOE}}^{\text{proposed}}(t)$.
\end{remark}

For the setup considered in (\ref{LinearSys}) and (\ref{pSumInitCondnControl}), the reach set $\mathcal{X}(t)$ is guaranteed to be convex, which allows us to transcribe (\ref{HausdorffDefn}) in terms of the support functions:
\begin{eqnarray}
\delta_{{\rm{H}}}(t) = \sup_{\bm{s}\in\mathcal{S}^{n_{x}-1}} \big\vert h_{\widehat{\mathcal{E}}(t)}(\bm{s}) \:-\: h_{\mathcal{X}(t)}(\bm{s}) \big\vert,
\label{HausdorffDistSptFn}	
\end{eqnarray}
where $\mathcal{S}^{n_{x}-1}$ denotes the Euclidean unit sphere embedded in $\mathbb{R}^{n_{x}}$. Thanks to property (iv) in Section II.1, the absolute value in (\ref{HausdorffDistSptFn}) can be dropped. Furthermore, suppose that both $\mathcal{X}(t)$ and $\widehat{\mathcal{E}}(t)$ are centered at origin, as in Section VI; in particular, $\widehat{\mathcal{E}}(t) \equiv \mathcal{E}\left(\bm{0},\widehat{\bm{Q}}(t)\right)$. Using (\ref{FireypSum}), (\ref{SptFnEllipsoid}), (\ref{pSumInitCondnControl}) and (\ref{dLTVSptFn}), we can then rewrite (\ref{HausdorffDistSptFn}) as
\begin{align}
&\delta_{{\rm{H}}}(t) \nonumber\\
&=\!\sup_{\bm{s}\in\mathcal{S}^{n_{x}-1}}\!\!\left(\bm{s}^{\top}\!\widehat{\bm{Q}}(t)\bm{s}\right)^{\!\frac{1}{2}} - \Bigg\{\!\!\left(\!\displaystyle\sum_{i=1}^{m}\!\left(\!\bm{s}^{\top}\bm{F}^{t}\bm{Q}_{0i}\left(\bm{F}^{\top}\right)^{\!t}\!\bm{s}\!\right)^{\!\!\frac{p_{1}}{2}}\!\right)^{\!\!\frac{1}{p_{1}}}\nonumber\\
&+ \displaystyle\sum_{k=0}^{t-1}\left(\displaystyle\sum_{j=1}^{n}\!\left(\bm{s}^{\top}\bm{F}^{t-k-1}\bm{G}\bm{U}_{j}(t)\bm{G}^{\top}\left(\bm{F}^{\top}\right)^{\!t-k-1}\bm{s}\right)^{\!\!\frac{p_{2}}{2}}\right)^{\!\!\frac{1}{p_{2}}}\!\!\Bigg\}.
\label{HausdorffMVOEvspSum}	
\end{align}
In words, (\ref{HausdorffMVOEvspSum}) is the two-sided Hausdorff distance between the reach set of (\ref{LinearSys}) and its outer ellipsoidal approximation at time $t$, provided $\mathcal{X}_{0}$ is the $p_{1}$-sum of $m$ centered ellipsoids, and $\mathcal{U}(t)$ is the $p_{2}$-sum of $n$ time-varying centered ellipsoids, where  $p_{1},p_{2}\geq 1$.

To illustrate the use of (\ref{HausdorffMVOEvspSum}), let us consider $m=n=1$ as in Section VI.A. In this case, (\ref{HausdorffMVOEvspSum}) can be expressed succinctly as
\begin{eqnarray}
\delta_{{\rm{H}}}(t) = \sup_{\bm{s}\in\mathcal{S}^{n_{x}-1}}\! \big\Vert \widehat{\bm{Q}}^{\frac{1}{2}}(t)\bm{s} \big\Vert_{2} - \displaystyle\sum_{k=0}^{t} \big\Vert\bm{M}_{k}^{\frac{1}{2}}(t)\bm{s}\big\Vert_{2},
\label{HausdorffMVOEvsMinkSum}	
\end{eqnarray}
where $\bm{M}_{k} := \bm{F}^{t-k-1}\bm{G}\bm{U}(t)\bm{G}^{\top}\left(\bm{F}^{\top}\right)^{\!t-k-1} \in \mathbb{S}_{+}^{n_{x}}$ for $k=0,1,\hdots, t-1$, and $\bm{M}_{t} := \bm{F}^{t}\bm{Q}_{0}\left(\bm{F}^{\top}\right)^{\!t} \in \mathbb{S}_{+}^{n_{x}}$. From (\ref{HausdorffMVOEvsMinkSum}), a simple upper bound for $\delta_{{\rm{H}}}(t)$ follows. 

\begin{proposition}\label{HausdorffUBprop} (\textbf{Upper bound for} $\delta_{{\rm{H}}}(t)$ \textbf{when} $m=n=1$)
\begin{eqnarray}
	\delta_{{\rm{H}}}(t) \leq \bigg\Vert \widehat{\bm{Q}}^{\frac{1}{2}}(t) - \displaystyle\sum_{k=0}^{t} \bm{M}_{k}^{\frac{1}{2}}(t) \bigg\Vert_{2}.
\label{dHub}	
\end{eqnarray}	
\end{proposition}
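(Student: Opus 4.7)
The plan is to start from the succinct support-function form (\ref{HausdorffMVOEvsMinkSum}) already derived in the excerpt, and then peel off two triangle-inequality steps to collapse the sum inside the supremum into a single matrix norm.

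First I would fix an arbitrary unit vector $\bm{s}\in\mathcal{S}^{n_x-1}$ and bound the integrand of the supremum in (\ref{HausdorffMVOEvsMinkSum}). The subadditivity of the Euclidean norm gives
\[
\sum_{k=0}^{t}\big\Vert\bm{M}_{k}^{1/2}(t)\bm{s}\big\Vert_{2} \;\geq\; \bigg\Vert\sum_{k=0}^{t}\bm{M}_{k}^{1/2}(t)\bm{s}\bigg\Vert_{2},
\]
so that, for every $\bm{s}$,
\[
\big\Vert\widehat{\bm{Q}}^{1/2}(t)\bm{s}\big\Vert_{2}-\sum_{k=0}^{t}\big\Vert\bm{M}_{k}^{1/2}(t)\bm{s}\big\Vert_{2} \;\leq\; \big\Vert\widehat{\bm{Q}}^{1/2}(t)\bm{s}\big\Vert_{2} - \bigg\Vert\sum_{k=0}^{t}\bm{M}_{k}^{1/2}(t)\bm{s}\bigg\Vert_{2}.
\]

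Next I would apply the reverse triangle inequality to the right-hand side, yielding
\[
\big\Vert\widehat{\bm{Q}}^{1/2}(t)\bm{s}\big\Vert_{2} - \bigg\Vert\sum_{k=0}^{t}\bm{M}_{k}^{1/2}(t)\bm{s}\bigg\Vert_{2} \;\leq\; \bigg\Vert\bigg(\widehat{\bm{Q}}^{1/2}(t) - \sum_{k=0}^{t}\bm{M}_{k}^{1/2}(t)\bigg)\bm{s}\bigg\Vert_{2}.
\]
Finally, taking supremum over $\bm{s}\in\mathcal{S}^{n_x-1}$ on both sides and recognizing that the supremum of $\|\bm{A}\bm{s}\|_2$ over the unit sphere equals the spectral (operator 2-) norm $\|\bm{A}\|_2$, I would conclude the desired estimate (\ref{dHub}).

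Since every step is a one-line application of a standard inequality, there is no substantive obstacle; the only care needed is to check that the two triangle inequalities are used in the correct direction (note that it is the $-\sum_k\|\cdot\|_2$ term which gets weakened, hence the first inequality, followed by the reverse-triangle step to produce the difference inside a single norm). The identification with the operator 2-norm requires nothing beyond $\widehat{\bm{Q}}^{1/2}(t),\bm{M}_{k}^{1/2}(t)\in\mathbb{S}_+^{n_x}$, which is assured because both $\widehat{\bm{Q}}(t)$ and the $\bm{M}_{k}(t)$ are in $\mathbb{S}_+^{n_x}$ by construction.
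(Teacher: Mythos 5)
Your proposal is correct and follows essentially the same route as the paper's proof: the chain of triangle/reverse-triangle inequalities to put the difference inside a single norm applied to $\bm{s}$, followed by passing to the operator $2$-norm and taking the supremum over the unit sphere. You merely make explicit the two sub-steps that the paper compresses into ``repeated use of the reverse triangle inequality.''
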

\begin{proof}
For $\bm{s}\in\mathcal{S}^{n_{x}-1}$	, by the (repeated use of) reverse triangle inequality, we have
\begin{align*}
\big\Vert \widehat{\bm{Q}}^{\frac{1}{2}}(t)\bm{s} \big\Vert_{2} \!-\! \displaystyle\sum_{k=0}^{t}\!\big\Vert\bm{M}_{k}^{\frac{1}{2}}(t)\bm{s}\big\Vert_{2} &\leq \Bigg\Vert \left(\!\widehat{\bm{Q}}^{\frac{1}{2}}(t) - \displaystyle\sum_{k=0}^{t} \bm{M}_{k}^{\frac{1}{2}}(t)\!\right)\bm{s} \Bigg\Vert_{2}\\
&\leq \bigg\Vert \widehat{\bm{Q}}^{\frac{1}{2}}(t) - \displaystyle\sum_{k=0}^{t} \bm{M}_{k}^{\frac{1}{2}}(t) \bigg\Vert_{2}, 	
\end{align*}
where the last step follows from the sub-multiplicative property of the matrix 2-norm. Since this holds for any $\bm{s}\in\mathcal{S}^{n_{x}-1}$, the same holds for the optimal $\bm{s}$ in (\ref{HausdorffMVOEvsMinkSum}). Hence the result.
\end{proof} 
\begin{figure}[t]
 \centering
\includegraphics[width=\linewidth]{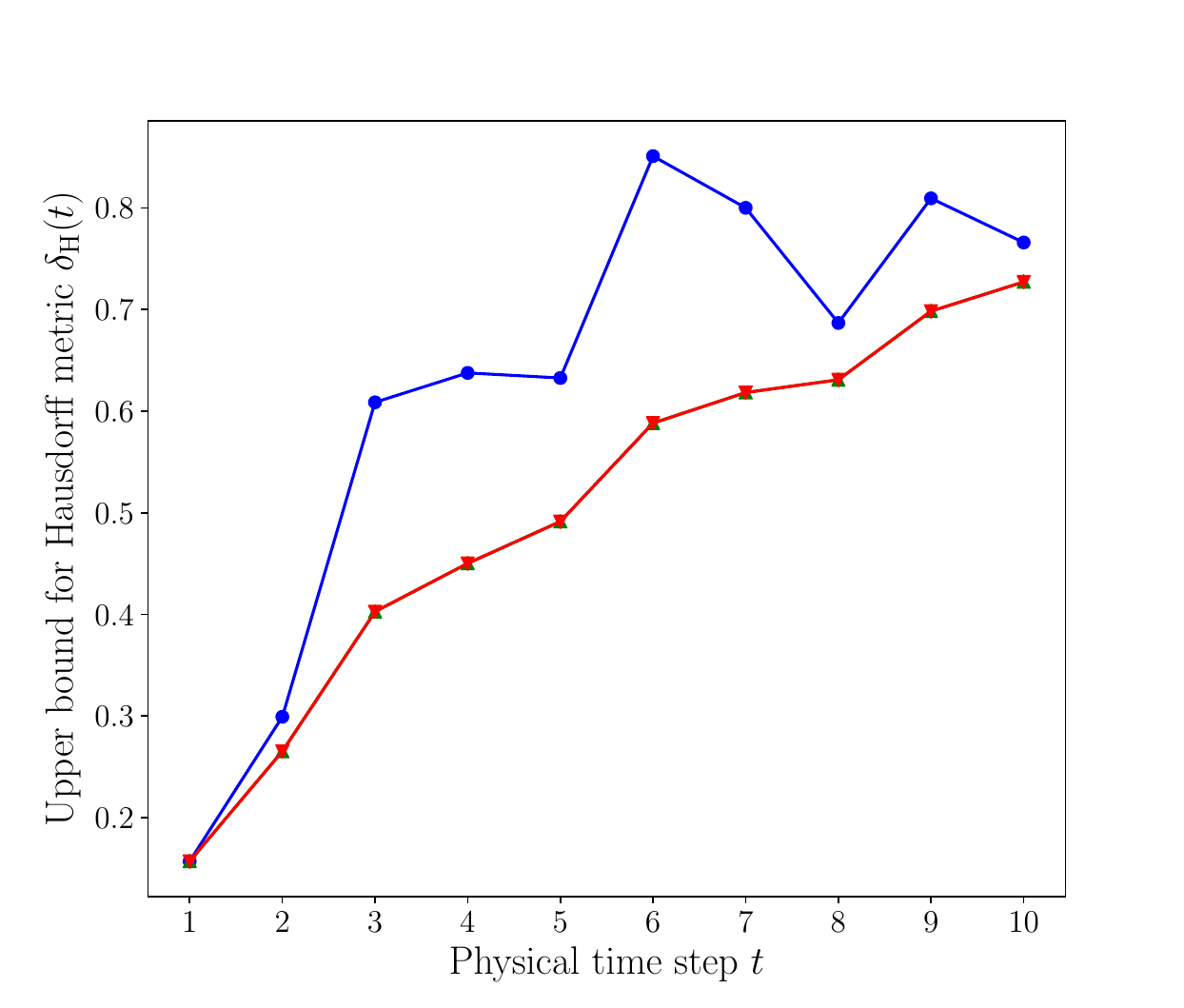}
 \caption{The upper bound (\ref{dHub}) for the Hausdorff distance $\delta_{\rm{H}}(t)$ between the reach set of (\ref{LinearSys}) subject to ellipsoidal uncertainties (the case $m=n=1$), and its  various MVOE approximations. The data for this plot are same as in Section \ref{ReachabilitySubsection}, and correspond to Fig. \ref{ReachSetPlot}, \ref{FPCompTime} and Table \ref{TableVolComparison}. The MVOE from the SDP computation (\protect\circleline) results in a larger upper bound than the MVOEs computed using the fixed point recursion (\protect\invtriangleline) proposed herein, and the root-bracketing algorithm (\protect\triangleline) proposed in \cite{HalderMinkSum2018}.}
 \label{HausdorffPlot}
\end{figure}
We can use the bound derived in Proposition \ref{HausdorffUBprop} as a conservative numerical estimate for $\delta_{{\rm{H}}}(t)$. In Fig. \ref{HausdorffPlot}, we use the data from Section \ref{ReachabilitySubsection} to plot the upper bound (\ref{dHub}) for the Hausdorff distance between the reach set of (\ref{LinearSys}) and its MVOE approximations. The plot shows that the MVOEs computed using the proposed algorithms result in a lower upper bound than the MVOE computed using the standard SDP approach. This indicates that the proposed MVOEs approximate the reach set quite well compared to the SDP approach, even though their volumes are slightly larger than the SDP MVOEs, as noted in Table \ref{TableVolComparison}.

\subsection{Computational Complexity}\label{subsecComputationalComplexity}
The runtime for computing the approximate MVOE for the $p$-sum of a given number of summand ellipsoids using the proposed fixed point recursion (\ref{FPE}) scales as $\mathcal{O}\left(d^{3}\right)$ when each of the (non-degenerate) summand ellipsoids are in $\mathbb{R}^{d}$. This is because the fixed point recursion needs the generalized eigenvalues $\{\lambda_{i}\}_{i=1}^{d}$ of the matrix pencil formed by a pair of shape matrices associated with the summand ellipsoid pair -- a task that takes $\mathcal{O}\left(d^{3}\right)$ worst-case runtime, and dominates the remaining $\mathcal{O}(d)$ computation. In particular, the computation at time $t$ in Section \ref{ReachabilitySubsection} requires finding the approximate MVOE of the Minkowski sum of $t+1$ summand ellipsoids in $\mathbb{R}^{n_{x}}$, resulting in $\mathcal{O}\left(t n_{x}^{3}\right)$ runtime. Likewise, the computation at time $t$ in Section \ref{subsecmnmorethan1} requires finding the approximate MVOE of the Minkowski sum of $t+1$ summand convex sets in $\mathbb{R}^{n_{x}}$, where one of the summand is the $p_{1}$-sum of $m$ ellipsoids, and each of the remaining $t$ convex sets are the $p_{2}$-sum of $n$ ellipsoids, thus resulting in $\mathcal{O}\left((m + (t-1)n)n_{x}^{3}\right)$ runtime. This can be interpreted as follows. Since $m$ and $n$ encode the complexity of the set-valued initial condition and control uncertainties, the worst-case runtime scales linearly with the complexity of the set-valued uncertainty description, and cubic in state dimension.

%%%%%%%%%%%%%%%%%%%%%%%%%%%%%%%%%%%%%%%%%%%%%%%%%%%%%%%%%%%%%%%%%%%%%%%%%%%%%%%%

\section{Conclusions}
Computing a tight ellipsoidal outer approximation of convex set-valued data is necessary in many systems-control problems such as reachability analysis, especially when the convex set is described as set operations on other ellipsoids. Depending on the set operation, isolated results and algorithms are known in the literature to compute the minimum trace and minimum volume outer ellipsoidal approximations. In this paper, we unify such results by considering the $p$-sum of ellipsoids, which for different $p\in[1,\infty]$, generate different convex sets from the summand ellipsoids. Our analytical results lead to efficient numerical algorithms, which are illustrated by the reach set computation for a discrete-time linear control system. A specific direction of future study is to extend the reach set computation for hybrid systems, by computing the intersection of the guard sets with $p$-sum of ellipsoids. Comparing the proposed algorithms with existing reachability computation tools on benchmark problems will be pursued in our follow-up work. We hope that the models and methodologies presented here, will help to expand the ellipsoidal calculus tools \cite{KurzhanskiValyi1997,schweppe1973,KuzhanskiVaraiyaBook2014,kurzhanskiy2007ellipsoidal,gagarinov2012computation,ros2002ellipsoidal} for systems-control applications, and motivate further studies to leverage the reported computational benefits in real-time applications.

%%%%%%%%%%%%%%%%%%%%%%%%%%%%%%%%%%%%%%%%%%%%%%%%%%%%%%%%%%%%%%%%%%%%%%%%%%%%%%%%

\section*{Acknowledgement}
The author is grateful to Suvrit Sra for suggesting \cite{MOsuvrit} the fixed point iteration (\ref{FPE}) for $p=1$. This research was partially supported by a 2018 Faculty Research Grant awarded by the
Committee on Research from the University of California, Santa Cruz, and
by a 2018 Seed Fund Award from CITRIS and the Banatao Institute at the University of California. The author is indebted to Lia Gianfortone for discussions and help with preliminary numerical investigation.

%%%%%%%%%%%%%%%%%%%%%%%%%%%%%%%%%%%%%%%%%%%%%%%%%%%%%%%%%%%%%%%%%%%%%%%%%%%%%%%%

\appendix

\subsection{Set Inclusion for the $p$-Sum}\label{AppendixSetInclusionpSum}
In the following, we formally state and prove the set
inclusion relation for the $p$-sum.

\begin{proposition}
Given compact convex sets $\mathcal{K}_{1},\mathcal{K}_{2}$, we have $\mathcal{K}_{1} +_{q} \mathcal{K}_{2} \subseteq \mathcal{K}_{1} +_{p} \mathcal{K}_{2}$	for $1 \leq p < q < \infty$.
\end{proposition}
\begin{proof}
We proceed in two steps: \emph{first}, we show that $h_{\mathcal{K}_{1} +_{q} \mathcal{K}_{2}} \leq h_{\mathcal{K}_{1} +_{p} \mathcal{K}_{2}}$, $1 \leq p < q < \infty$ holds; \emph{second}, we show that for any two compact convex sets $\mathcal{A},\mathcal{B}$, the support function inequality $h_{\mathcal{A}} \leq h_{\mathcal{B}}$ is equivalent to $\mathcal{A} \subseteq \mathcal{B}$.	

From Definition \ref{pSumDefn}, showing the \emph{first step} amounts to proving $\left(\left(h_{\mathcal{K}_{1}}(\bm{x})\right)^{q} + \left(h_{\mathcal{K}_{2}}(\bm{x})\right)^{q}\right)^{1/q} \leq \left(\left(h_{\mathcal{K}_{1}}(\bm{x})\right)^{p} + \left(h_{\mathcal{K}_{2}}(\bm{x})\right)^{p}\right)^{1/p}$ for $1 \leq p < q < \infty$, which clearly holds from monotonicity. For the \emph{second step}, we recall that the Legendre-Fenchel conjugate of a function $f(\bm{x})$, denoted as $f^{*}(\bm{y})$, is given by
\[f^{*}(\bm{y}) := \underset{\bm{x}}{\sup}\:\langle\bm{y},\bm{x}\rangle - f(\bm{x}).\]
As a consequence of the above definition, $f \geq g \Rightarrow f^{*} \leq g^{*}$. Furthermore, recall the well-known fact that for a proper, closed, convex function $f$, its bi-conjugate $f^{**} = f$. Now, notice that for any two compact convex sets $\mathcal{A},\mathcal{B}$, the support functions $h_{\mathcal{A}}, h_{\mathcal{B}}$ are the conjugates of the respective set indicator functions $\mathbf{1}_{\mathcal{A}}, \mathbf{1}_{\mathcal{B}}$, i.e., $h_{\mathcal{A}} = \mathbf{1}_{\mathcal{A}}^{*}$, and $h_{\mathcal{B}} = \mathbf{1}_{\mathcal{B}}^{*}$. Here,
\[\mathbf{1}_{\mathcal{A}}(\bm{x}) := \begin{cases}
 0 & \text{if}\;\bm{x}\in\mathcal{A},\\
 +\infty & \text{otherwise}.	
 \end{cases}
\]
The set indicator function $\mathbf{1}_{\mathcal{B}}(\bm{x})$ is defined likewise. Thus,
\[\mathcal{A}\subseteq\mathcal{B} \:\Rightarrow\: \mathbf{1}_{\mathcal{A}} \geq \mathbf{1}_{\mathcal{B}} \:\Rightarrow\: \mathbf{1}_{\mathcal{A}}^{*} \leq \mathbf{1}_{\mathcal{B}}^{*} \:\Rightarrow\: h_{\mathcal{A}} \leq h_{\mathcal{B}}.\]
Conversely,
\[h_{\mathcal{A}} \leq h_{\mathcal{B}} \:\Rightarrow\: h_{\mathcal{A}}^{*} \geq h_{\mathcal{B}}^{*} \:\Rightarrow\: \mathbf{1}_{\mathcal{A}} \geq \mathbf{1}_{\mathcal{B}} \:\Rightarrow\: \mathcal{A}\subseteq\mathcal{B}.\]
This proves the second step, i.e., $\mathcal{A} \subseteq \mathcal{B} \Leftrightarrow h_{\mathcal{A}} \leq h_{\mathcal{B}}$. We conclude the proof by setting $\mathcal{A}\equiv \mathcal{K}_{1} +_{q} \mathcal{K}_{2}$ and $\mathcal{B} \equiv \mathcal{K}_{1} +_{p} \mathcal{K}_{2}$ for $1 \leq p < q < \infty$, and then combining the two steps.
\end{proof}

\begin{figure}[t]
 \centering
\includegraphics[width=\linewidth]{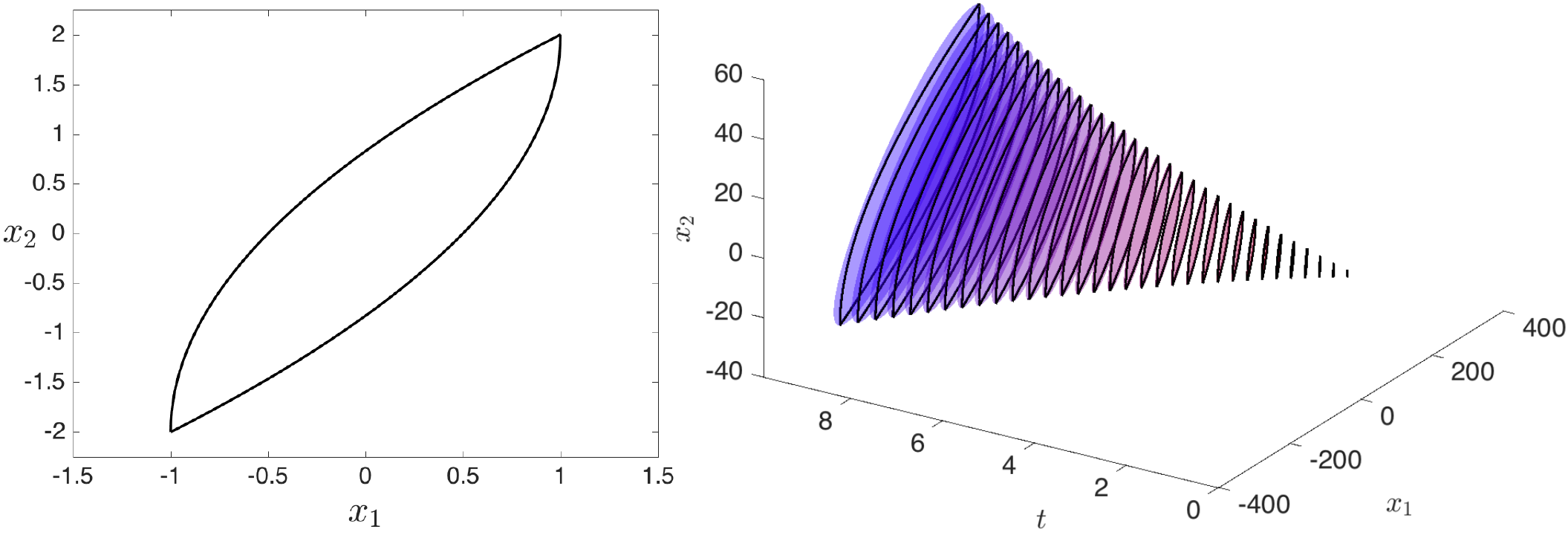}
 \caption{\emph{Left:} The reach set for the double integrator $\ddot{x}=u$ at $t=1$ with initial condition $\left(x_{10},x_{20}\right)\equiv(0,0)$, and control bound: $|u| \leq 2$. \emph{Right:} 33 different snapshots of the reach sets (black solid outlines) for the double integrator with $|u| \leq 2$, $t\in[0,10]$, superimposed with their respective approximate MVOEs (filled, no outline) at those times. For the MVOE computation, the initial set-valued ellipsoidal uncertainty is taken as $\mathcal{E}\left(\bm{0},10^{-4}\bm{I}_{2}\right)$, where $\bm{0}$ denotes the $2\times 1$ zero vector, and $\bm{I}_{2}$ is the $2\times 2$ identity matrix. This small ellipsoidal initial condition uncertainty serves as a proxy for the fixed initial condition $(0,0)$ used in plotting the true reach sets (solid outlines), and allows using the proposed fixed point algorithm.}
 \label{FigDoubleIntegratorReachSet}
\end{figure}

\subsection{Approximate MVOE for the Double Integrator}\label{AppendixDoubleIntegrator}
We performed the approximate MVOE computation for the reach set of the double integrator $\ddot{x} = u$ with bounded control $|u|\leq \mu$, $\mu > 0$, using the proposed Algorithm \ref{AlgoMVOE}. Fig. \ref{FigDoubleIntegratorReachSet} shows that the resulting approximate MVOEs obtained via the proposed algorithm provide tight outer-approximations of the true reach sets. This example is special in the sense that the true reach set starting from $\left(x_{10},x_{20}\right)\equiv(0,0)$ with the control bound $|u|\leq \mu$ can be analytically computed (see e.g., \cite[p. 111]{KuzhanskiVaraiyaBook2014}) as follows. Let the parameter $\sigma\in[-t,0]$. the parametric equation for the upper boundary $\left(x_{1}^{+}(t),x_{2}^{+}(t)\right)$ is
\begin{subequations}
\begin{align}
x_{1}^{+}(t) &= x_{10} + x_{20} t + \mu \left(\frac{t^{2}}{2} - \sigma^{2}\right),\\
x_{2}^{+}(t) &= x_{20} + 2\mu\sigma + \mu t.	
\end{align}
\label{UpperBndry}	
\end{subequations}
The same for the lower boundary $\left(x_{1}^{-}(t),x_{2}^{-}(t)\right)$ is
\begin{subequations}
\begin{align}
x_{1}^{-}(t) &= x_{10} + x_{20} t - \mu \left(\frac{t^{2}}{2} - \sigma^{2}\right),\\
x_{2}^{-}(t) &= x_{20} - 2\mu\sigma - \mu t.	
\end{align}
\label{LowerBndry}	
\end{subequations}
These analytical expressions not only help in visually con- firming the performance of the proposed algorithm, but also allow us to quantify the ratio $\vol\left(\mathcal{E}_{\text{MVOE}}(t)\right)/\vol\left(\mathcal{X}(t)\right)$, where the numerator equals $\pi\sqrt{\det\left(\bm{Q}(t)\right)}$, the matrix $\bm{Q}(t)$ being the $2\times 2$ shape matrix obtained from our MVOE algorithm at time $t$, and the denominator has closed-form expression $2\mu^{2}t^{3}/3$, where $|u| \leq \mu$. We found the said volume ratio to be $\geq 85\%$ for all 33 snapshots in Fig. \ref{FigDoubleIntegratorReachSet}. 

In general, the true reach set is not available, and one resorts to indirect methods (see Section \ref{subsecQualityOfApprox}) to assess the quality of the outer-approximation.

\begin{figure}[t]
 \centering
\includegraphics[width=\linewidth]{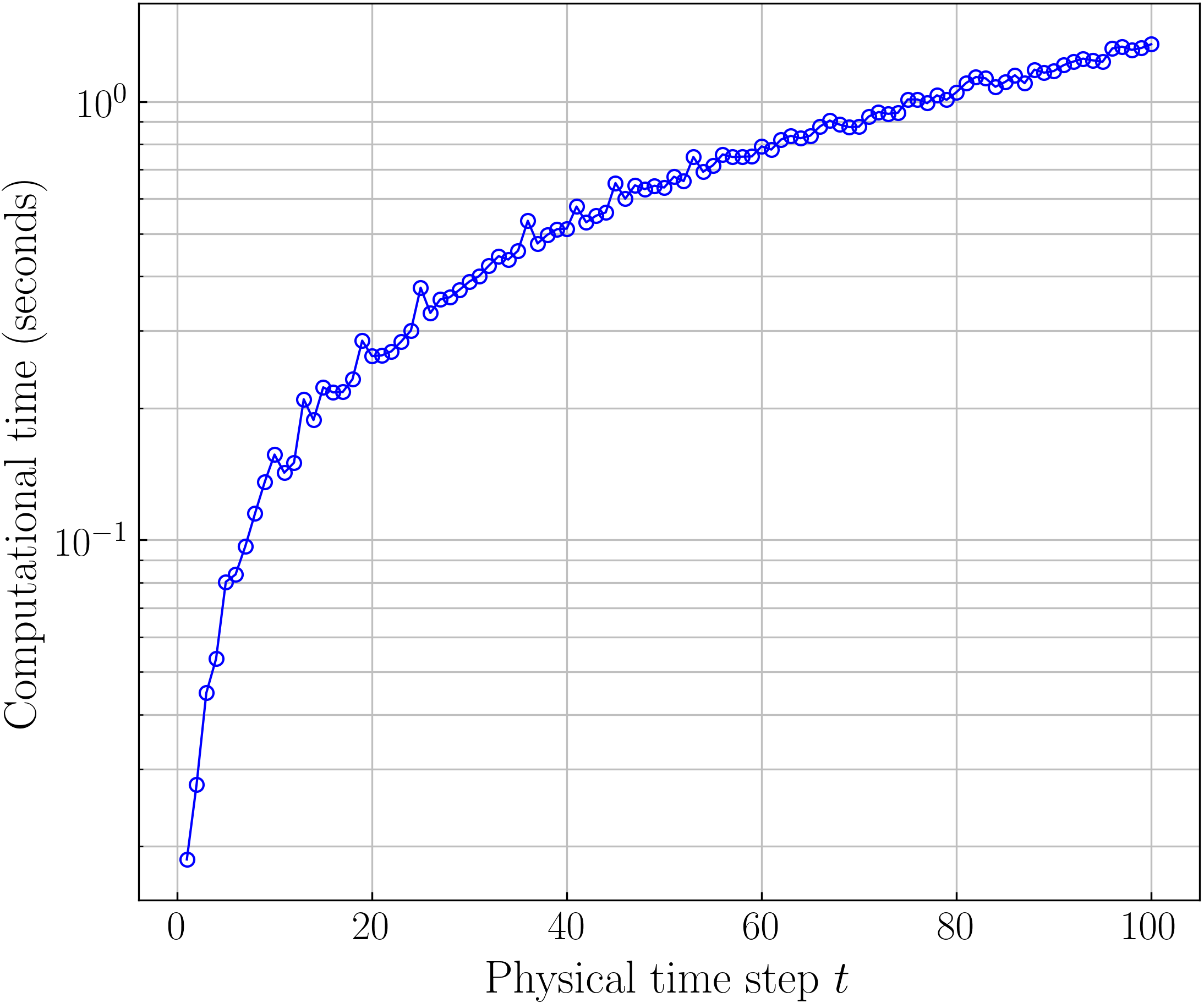}
 \caption{The computational times for the MVOE calculations reported in Appendix \ref{AppendixISS} using the proposed Algorithm 2.}
 \vspace*{-0.15in}
 \label{FigISScomptime}
\end{figure}

\subsection{Large Scale Example of Approximate MVOE Computation}\label{AppendixISS}
    To illustrate the scalability of the proposed algorithm, we now provide a large scale numerical example for MVOE computation. Specifically, we consider a continuous time linear time invariant (LTI) model of the first assembly stage of the International Space Station (ISS), also known as the Russian service module 1R or ISS-I model \cite{Gugercinc2001CDC,Chahlaoui2005,tran2016large}. The model consists of 270 states and 3 controls; the model data are available at \url{http://verivital.com/hyst/benchmark-large-scale/}. From this model data, we generate the corresponding discrete-time LTI system matrices $\bm{F}$ (of size $270\times 270$) and $\bm{G}$ (of size $270\times 3$) with sampling time $h=0.05$ seconds. As in Section VI.A, we consider $m=n=1$, for which the reach state $\mathcal{X}(t)\subset\mathbb{R}^{270}$ is of the form (\ref{EllipsoidalMinkSum}) for the ellipsoidal uncertainties in initial condition modeled via the shape matrix $\bm{Q}_{0}$, and the (time-varying) ellipsoidal actuation uncertainties modeled via the shape matrices $\bm{U}(t)$. We set $\bm{Q}_{0}=\bm{I}_{270}$, and $\bm{U}(t) = (1 + \cos^{2}(t)){\rm{diag}}\left({\rm{rand}}(3,1)\right)$, that is, time-varying positive scaling of random positive diagonal matrices. We used Algorithm 2 to compute the approximate MVOEs of $\mathcal{X}(t)$ for each $t=1,2,\hdots,100$. As in Section VI.A, this for any given $t$, amounts to computing the MVOE of the 1-sum of $t+1$ summand ellipsoids. The resulting computational times are shown in Fig. \ref{FigISScomptime}.

%%%%%%%%%%%%%%%%%%%%%%%%%%%%%%%%%%%%%%%%%%%%%%%%%%%%%%%%%%%%%%%%%%%%%%%%%%%%%%%%

%\vspace*{-0.1in}
%
%
%\begin{IEEEbiography}[{\includegraphics[height=1.25in]{Abhishek-Halder.jpg}}]{Abhishek Halder} 
%is an Assistant Professor in the Department of Applied Mathematics at University of California, Santa Cruz. Before that he held postdoctoral positions in the Department of Mechanical and Aerospace Engineering at University of California, Irvine, and in the Department of Electrical and Computer Engineering at Texas A\&M University. He obtained his Bachelors and Masters from Indian Institute of Technology Kharagpur in 2008, and Ph.D. from Texas A\&M University in 2014, all in Aerospace Engineering. His research interests are in stochastic systems, control and optimization with application focus on large scale cyberphysical systems.
%\end{IEEEbiography}

\end{document}